\newtheorem{theorem}{Theorem}[section]
\newtheorem{lemma}[theorem]{Lemma}
\theoremstyle{definition}
\numberwithin{equation}{section}
\theoremstyle{remark}
\newtheorem{remark}[theorem]{Remark}
\numberwithin{equation}{section}
\newcommand{\ba}{\begin{array}}
	\newcommand{\ea}{\end{array}}
\begin{document}
	\date{}
	\title{ \bf\large{Grazing duration and intensity modulate vegetation dynamics in semi-arid ecosystems with seasonal succession}\footnote{This work was partially supported by grants from National Science Foundation of China (12371503, 12071382), Natural Science Foundation of
			Chongqing (CSTB2024NSCQ-MSX0992).}}
	\author{ Junhong Gan,\ \ Guohong Zhang\footnote{Corresponding Author. Email: zgh711@swu.edu.cn  },\ \
		 Xiaoli Wang\\
		{\small School of Mathematics and Statistics, Southwest
			University, }\\
		{\small Chongqing, 400715, P.R. China.} }
	\maketitle
	\noindent
	\begin{abstract}
		{
			This study investigates the impacts of grazing duration and intensity on vegetation population dynamics in semi-arid ecosystems characterized by seasonal succession. A novel piecewise periodic model is proposed, dividing the annual cycle into three distinct phases: dry season, growth period and grazing period in wet season. We derive critical thresholds for the durations of the dry season and grazing period that determine the persistence or extinction of a single vegetation species. For two competing species, we analyze how grazing parameters influence competitive outcomes, including exclusion, coexistence, and bistability. Theoretical results are supported by numerical simulations, which illustrate bifurcation diagrams and phase transitions under varying grazing regimes. Our findings provide actionable insights for sustainable grazing management in arid and semi-arid regions.
		}
		
		\noindent{\emph{Keywords}}:  Semi-arid ecosystems; Seasonal succession; Grazing duration and intensity;   Sustainable grazing management
	\end{abstract}
	
	\section{Introduction}
	\ \  \ \
	In recent decades, desertification and climate change have posed severe threats to the stability of vegetation ecosystems in arid and semi-arid regions. The dynamics of these ecosystems are influenced by a complex interplay of climatic factors, soil moisture availability, and grazing.
Previous studies have largely focused on reaction-diffusion models to explain the emergence and evolution of spatial patterns in vegetation systems (see \cite{vegetation1, vegetation2,vegetation4,vegetation3,vegetation5,vegetation6,vegetation7} and reference therein). At the same time, in order to accounting for  the seasonality of the climate in some semi-arid regions, that is wet and dry  season, integro-difference models are employed  to  account for the temporal separation
	of plant growth processes during the wet season and seed dispersal processes during
	the dry season \cite{vegetation11}. An
impulsive reaction-advection-diffusion model is proposed to analytically investigate
the effects of rainfall intermittency on the onset of patterns\cite{ vegetation12}.

While rainfall has long been recognized as a primary driver of vegetation pattern formation, the role of grazing intensity and its interaction with plant-water feedback mechanisms remain important\cite{vgrazing1, vgrazing3,vgrazing2,Harvest1,Harvest2,Harvest3,Harvest5,Harvest6,Harvest7, Harvestsun}.
 In particular, Metzger et al. \cite{seasonal} used a spatial ecosystem model to compare two grazing areas: one is grazed only during the wet season, and the other is grazed year around year. They found that the two areas were similar with respect to grazer density during the wet season but not in the dry season.

	Despite advances above, existing models often oversimplify seasonal dynamics by considering only two phases, that is  wet and dry season. However, in reality, semi-arid ecosystems often exhibit three distinct phases: a dry season with high plant mortality, a growth period without grazing and a grazing period within the wet season.  Thereby existing models often fail to isolate the specific effects of grazing duration and intensity within the wet season. This is particularly relevant for semi-arid ecosystems, where grazing often occurs only during a portion of the wet season.
This seasonal succession profoundly influences plant competition, persistence, and community structure. Understanding how the timing and intensity of grazing influence vegetation competition and persistence is critical for designing sustainable grazing management systems.

To address the gap above, we assume there exist three distinct time periods in semi-arid vegetation system (see Figure 1):
(1) a dry season;
(2) a growth period without grazing within wet season;
(3) a grazing period within  wet season.
 At the same time, we ignore the effect of spatial mobility for vegetation. Then a novel three-stage Lotka-Volterra competition model is proposed as following:
	\begin{equation}\label{1.5}
		\begin{cases}
			\displaystyle  \frac{du}{dt} = -d_1 u,  &\;\;  t \in (nT, nT + \tau_1],\\
			\displaystyle  \frac{dv}{dt} = -d_2 v,  &\;\;  t \in (nT, nT + \tau_1],
		\end{cases}
	\end{equation}
	
	\begin{equation}\label{1.6}
		\begin{cases}
			\displaystyle \frac{du}{dt} = r_1 u (1 - \frac{u}{K_1} - b_1 v),&\;\;  t \in (nT + \tau_1, nT + \tau_2],\\
			\displaystyle \frac{dv}{dt} = r_2 v (1 - \frac{v}{K_2} - b_2 u),&\;\; t \in (nT + \tau_1, nT + \tau_2],
		\end{cases}
	\end{equation}
	
	\begin{equation}\label{1.7}
		\begin{cases}
			\displaystyle \frac{du}{dt} = r_1 u (1 - \frac{u}{K_1} - b_1 v) - q_{1}E_{1} u, &\;\;  t \in (nT + \tau_2, (n+1) T],\\
			\displaystyle \frac{dv}{dt} = r_2 v (1 - \frac{v}{K_2} - b_2 u) - q_{2}E_{2} v,&\;\; t \in (nT + \tau_2, (n+1) T].
		\end{cases}
	\end{equation}
	Here $u(t)$ and $v(t)$ represent densities of two types of vegetation populations. In bad season, $d_1$ and $d_2$ describe  the natural mortality rates of species $u(t)$ and $v(t)$, respectively. In wet season, $r_1$ and $r_2$ are net birth rates, $K_1$ and $K_2$ are the carrying capacities,  $b_1$ and $b_2$ are the competition coefficients of species $u(t)$ and $v(t)$, respectively.  $\tau_1$ is duration  of the dry season, $\tau_2-\tau_1$ represents the length of growth period and  $T-\tau_2$ is the duration of grazing period for vegetation (see Figure\ref{seasonalchart}). In the following sections, we always assume that $\tau_2\geq\tau_1$. $q_{1}E_{1}$ and $q_{2}E_{2}$ are the grazing intensities of species $u(t)$ and $v(t)$, respectively.
We point out that the structure of model  \eqref{1.5}-\eqref{1.7} allows us to independently analyze the impacts of grazing duration and intensity on vegetation dynamics, which is a refinement over previous two-stage models.

	\begin{figure}[htbp]
		\centering
		\includegraphics[width=\textwidth]{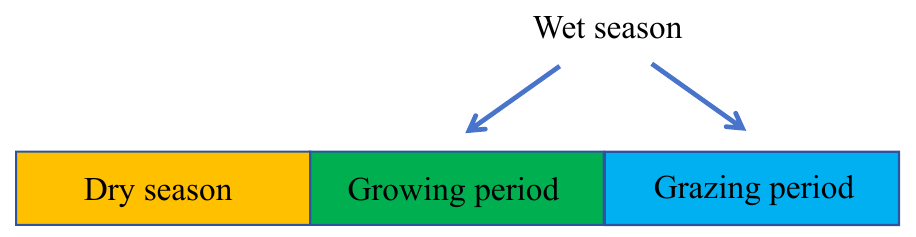}
		\captionsetup{font={footnotesize}}
		\caption{ Seasonal succession with three distinct time periods.  The duration  of the bad (unfavorable/dry) season is $\tau_1$, $\tau_2-\tau_1$ represents the length of growing period and  $T-\tau_2$ is the grazing period for vegetation in good (favorable) season.}
		\label{seasonalchart}
	\end{figure}	
In recent years, some time-periodic mathematical models with seasonal succession  have been developed to describe the periodic environment factors in population growth and disease transmission systems \cite{succession1,succession2}.   Piecewise continuous functions, rather than continuous periodic functions, have been employed to describe these seasonally switching systems \cite{succession3,succession4,succession5,SHU2025}. Particularly, Hsu and Zhao \cite{Hsu.Zhao} investigated a two-species competition model incorporating seasonal succession and give a complete classification for the global dynamics of the competition  system.
	Following  \cite{Hsu.Zhao}, Liu et al. \cite{Liu.2cs} extended the model by introducing distinct harvesting strategies and investigated the impact of the proportion of good season on species competition outcomes and harvesting practices.
	At the same time, a series of fishery models with closed seasons and open seasons have been investigated\cite{Liu.MMHarvest1,Liu.MMHarvest2}.
	Similar modeling approach has also been applied to the mathematical modeling of pulse and periodic release strategies for mosquito populations\cite{Yu.mosquito1, Yu.mosquito2,Yu.mosquito3,Yu.mosquito4}.

The main purpose of this paper is to explore  how the duration of grazing period and intensity of grazing affect the persistence and extinction  of the single vegetation species,  and extend the analysis to a two-species competition system, identifying how grazing parameters regulate competitive outcomes (exclusion, coexistence, or bistability). In particular, we investigate the existence and stability of positive periodic solutions, which are of great significance for understanding the long-term coexistence of species in seasonal environments.
This work not only extends the theoretical framework of seasonal succession models but also offers practical insights for grassland management in semi-arid regions such as northwestern China.

	For simplicity, we let $\bar{u} = \frac{u}{K_1}$, $\bar{t} = r_1 t$, $r = \frac{r_2}{r_1}$, $\bar{b}_1 = b_1 K_2$, $\bar{b}_2 = b_2 K_1$, $\bar{c}_1 = \frac{q_{1}E_{1}}{r_1}$, $\bar{c}_2 = \frac{r_2 q_{2}E_{2}}{c_1}$. We then rescale system \eqref{1.5}-\eqref{1.7}, drop the hat sign and obtain
	\begin{equation}\label{main1.1}
		\begin{cases}
			\displaystyle  \frac{du}{dt} = -d_1 u,  &\;\;  t \in (nT, nT + \tau_1],\\
			\displaystyle  \frac{dv}{dt} = -d_2 v,  &\;\;  t \in (nT, nT + \tau_1],
		\end{cases}
	\end{equation}
	
	\begin{equation}\label{main1.2}
		\begin{cases}
			\displaystyle \frac{du}{dt} = u(1 - u - b_1 v),&\;\;  t \in (nT + \tau_1, nT + \tau_2],\\
			\displaystyle \frac{dv}{dt} = r v(1 - v - b_2 u),&\;\; t \in (nT + \tau_1, nT + \tau_2],
		\end{cases}
	\end{equation}
	
	\begin{equation}\label{main1.3}
		\begin{cases}
			\displaystyle \frac{du}{dt} = u (1 - u - b_1 v) - c_1 u, &\;\;  t \in (nT + \tau_2, (n+1) T],\\
			\displaystyle \frac{dv}{dt} = r v (1 - v - b_2 u) - c_2 v,&\;\; t \in (nT + \tau_2, (n+1) T].
		\end{cases}
	\end{equation}

	This paper is organized as follows. In Section 2, we focus on the global dynamics of single vegetation population with seasonal grazing model, which leads to the existence conditions of semi-trivial steady states. In Section 3, we provide an understanding of the global dynamics of competition system \eqref{main1.1}-\eqref{main1.3}. In Section 4, we present numerical examples that serve to confirm our theoretical results. In Section5, we offer a  discussion of our findings.
	
	\section{Single-species persistence and extinction under seasonal grazing}
	\subsection{Preliminary lemmas and notation}
	\ \ \ \ \ System \eqref{main1.1}-\eqref{main1.3} admits a trivial steady state $E_0 = (0,0)$. To study the semi-trivial steady states of the system \eqref{main1.1}-\eqref{main1.3}, it is necessary to investigate the following two subsystems
	\begin{equation}\label{main1.4}
		\begin{cases}
			\displaystyle \frac{du}{dt} = -d_1 u, &\;\;  t \in (nT, nT + \tau_1],\\
			\displaystyle \frac{du}{dt} = u (1 - u),&\;\; t \in (nT + \tau_1, nT + \tau_2],\\
			\displaystyle \frac{du}{dt} = u (1 - u) - c_1 u,&\;\; t \in (nT + \tau_2, (n+1)T],\\
		\end{cases}
	\end{equation}
	and
	\begin{equation}\label{main1.5}
		\begin{cases}
			\displaystyle \frac{dv}{dt} = -d_2 v, &\;\;  t \in (nT, nT + \tau_1],\\
			\displaystyle \frac{dv}{dt} = r v (1 - v),&\;\; t \in (nT + \tau_1, nT + \tau_2],\\
			\displaystyle \frac{dv}{dt} = r v (1 - v) - c_2 v,&\;\; t \in (nT + \tau_2, (n+1)T].\\
		\end{cases}
	\end{equation}
	For system \eqref{main1.4}, we define
	\begin{align*}
		\overline{H}(x) &= u(\tau_1; 0, x), & \widetilde{H}(x) &= u(\tau_2; 0, x),  &  H(x) &= u(T; 0, x), \\
		\overline{H}_n(x) &= u(nT+\tau_1; 0, x), & \widetilde{H}_n(x) &= u(nT+\tau_2; 0, x), & H_n(x) &= u(nT;0,x),
	\end{align*}
	\(n=0,1,2,\cdots\), for an initial value $x > 0$ such that $\overline{H}_0(x)=\overline{H}(x), \widetilde{H}_0(x)=\widetilde{H}(x)$ and $ H_0(x) = x. $ Functions $ \overline{H}(x) , \widetilde{H}(x)$ and $H(x)$ are continuously differentiable.
	Similarly, for system \eqref{main1.5}, we define
	\begin{align*}
		\overline{K}(y) &= u(\tau_1; 0, y), & \widetilde{K}(y) &= u(\tau_2; 0, y),  &  K(y) &= u(T; 0, y), \\
		\overline{K}_n(y) &= u(nT+\tau_1; 0, y), & \widetilde{K}_n(y) &= u(nT+\tau_2; 0, y), & K_n(y) &= u(nT;0,y),
	\end{align*}
	$n=0,1,2,\cdots$, for an initial value $y > 0$ such that $\overline{K}_0(y) = \overline{K}(y), \widetilde{K}_0(y)=\widetilde{K}(y)$ and $ K_0(y) = y.$ Functions $ \overline{K}(y) , \widetilde{K}(y)$ and $K(y)$ are continuously differentiable.
	
	We first introduce a fundamental lemma  that is an extension of Lemma 2.1 of \cite{Liu.2cs} and serves as a basis for the proofs of our results in this paper.
	
	\begin{lemma}\label{lemma2.1}
		For any given initial values $x>0$ and $y>0$,  the following statements are valid.
		
		$(\mathbf{i})$ If $H(x)>x$, then sequences $\left\{ \overline{H}_{n}(x) \right\}_{0}^{\infty}$ , $\left\{ \widetilde{H}_{n}(x) \right\}_{0}^{\infty}$ and $\left\{ H_{n}(x) \right\}_{0}^{\infty}$ are strictly increasing. If $K(y)>y$, then sequences $\left\{ \overline{K}_{n}(y) \right\}_{0}^{\infty}$ , $\left\{ \widetilde{K}_{n}(y) \right\}_{0}^{\infty}$ and $\left\{ K_{n}(y) \right\}_{0}^{\infty}$ are strictly increasing.
		
		$(\mathbf{ii})$ If $H(x)=x$, then $H_{n}(x) = x$ for $n = 0, 1, 2, \cdots$. Furthermore, $u(t; 0, x)$ is a $T$-periodic solution of system \eqref{main1.4}. If $K(y) = y $, then $K_{n}(y) = y$ for $ n = 0, 1, 2, \cdots$. Furthermore, $v(t; 0, y)$ is a $T$-periodic solution of system \eqref{main1.5}.
		
		$(\mathbf{iii})$ If $H(x)<x$, then sequences $\left\{ \overline{H}_{n}(x) \right\}_{0}^{\infty}$, $\left\{ \widetilde{H}_{n}(x) \right\}_{0}^{\infty}$ and $\left\{ H_{n}(x) \right\}_{0}^{\infty}$ are strictly decreasing. If $ K(y) < y $, then sequences $\left\{ \overline{K}_{n}(y) \right\}_{0}^{\infty}$ , $\left\{ \widetilde{K}_{n}(y) \right\}_{0}^{\infty}$ and $\left\{ K_{n}(y) \right\}_{0}^{\infty}$ are strictly decreasing.
		
		$(\mathbf{iv})$ $\lim_{n \to \infty} H_{n}(x) = p_{1}$, where $H(p_{1}) = p_{1}$ and  $\lim_{n \to \infty} K_{n}(y) = p_{2}$, where $K(p_{2}) = p_{2}$.
	\end{lemma}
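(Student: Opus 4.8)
The plan is to reduce the three sequences to iterates of the single scalar return map $H$, and then to combine the monotonicity of $H$ with a uniform a priori bound. Since system \eqref{main1.4} is $T$-periodic, uniqueness of solutions gives $u(nT;0,x)=H\bigl(u((n-1)T;0,x)\bigr)$, so that $H_n(x)=H^{(n)}(x)$, the $n$-fold composition of $H$, with $H_0(x)=x$. Flowing a further $\tau_1$ (respectively $\tau_2$) beyond time $nT$ then yields $\overline{H}_n(x)=\overline{H}\bigl(H^{(n)}(x)\bigr)$ and $\widetilde{H}_n(x)=\widetilde{H}\bigl(H^{(n)}(x)\bigr)$. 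Thus all three sequences are governed by the scalar dynamics of $H$, and the corresponding statements for the $K$-sequences of \eqref{main1.5} follow by the identical argument.

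Next I would verify that $\overline{H}$, $\widetilde{H}$ and $H$ are strictly increasing. On the dry interval the flow is the explicit contraction $\overline{H}(x)=x\,e^{-d_1\tau_1}$, which is strictly increasing; on each of the two logistic-type intervals, distinct solutions cannot cross by uniqueness, so the associated flow maps are strictly increasing, and hence so are the compositions $\widetilde{H}$ and $H$. For part (i), suppose $H(x)>x$. Applying the increasing map $H$ and inducting gives $H_{n+1}(x)=H\bigl(H_n(x)\bigr)>H\bigl(H_{n-1}(x)\bigr)=H_n(x)$, so $\{H_n(x)\}$ is strictly increasing; because $\overline{H}$ and $\widetilde{H}$ are strictly increasing, $\overline{H}_n(x)=\overline{H}\bigl(H_n(x)\bigr)$ and $\widetilde{H}_n(x)=\widetilde{H}\bigl(H_n(x)\bigr)$ inherit this monotonicity. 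Part (iii) is the same with the inequalities reversed. For part (ii), $H(x)=x$ makes $x$ a fixed point, so $H_n(x)=x$ for all $n$; then $u(t+T;0,x)$ and $u(t;0,x)$ solve the same $T$-periodic system and agree at $t=0$, so by uniqueness they coincide and $u(\cdot;0,x)$ is $T$-periodic.

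The step I expect to be the main obstacle is the convergence in part (iv), where escape to infinity must be excluded in the increasing case. Here I would use the logistic structure to exhibit a constant $M>0$ with $H(y)<y$ for all $y>M$: the dry phase contracts $y$ to $y\,e^{-d_1\tau_1}$, and on each logistic interval a solution lying above the relevant equilibrium ($1$ in the growth phase, $1-c_1$ in the grazing phase) is nonincreasing, so for $y$ large the value after a full period cannot exceed $y\,e^{-d_1\tau_1}<y$; taking $M=e^{d_1\tau_1}$ suffices. If the increasing sequence ever exceeded $M$, monotonicity of $H$ would force $H_{n+1}(x)<H_n(x)$, a contradiction; hence $H_n(x)\le M$ for all $n$ and $\{H_n(x)\}$ is bounded above. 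In the decreasing case it is bounded below by $0$ by positivity of solutions, and in the stationary case it is constant. A bounded monotone sequence converges to some $p_1$, and passing to the limit in $H_{n+1}(x)=H\bigl(H_n(x)\bigr)$ using continuity of $H$ gives $H(p_1)=p_1$. The same estimate with $K$ in place of $H$ yields $K_n(y)\to p_2$ with $K(p_2)=p_2$.
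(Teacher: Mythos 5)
Your proposal is correct and follows essentially the same route as the paper: both reduce all three sequences to iterates of the scalar period map via $H_{n+1}(x)=H(H_n(x))$, $\overline{H}_n(x)=\overline{H}(H_n(x))$, $\widetilde{H}_n(x)=\widetilde{H}(H_n(x))$, derive the monotonicity in (i) and (iii) from the non-crossing of solutions guaranteed by uniqueness (you package this as strict monotonicity of the flow maps, the paper as a contradiction obtained by intersecting the solution with its time-shift $u(T+t;0,x)$), and settle (iv) by bounded monotone convergence plus continuity of $H$. Your explicit a priori bound $M=e^{d_1\tau_1}$ with $H(y)<y$ for all $y>M$ is, if anything, a more self-contained justification of the boundedness step than the paper's bare assertion that $0<H_n(x)<1$.
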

	\begin{proof}
		$(\mathbf{i})$ 	
		Suppose $u(t)=u(t;0,x)$ is a solution of system \eqref{main1.4}. By induction, we have
		\begin{equation*}
			H_0(x)=x, H_{n+1}(x)=u(T;0,H_n(x))=H(H_n(x)),
		\end{equation*}
		\begin{equation*}
			\overline{H}_0(x)=u(\tau_1;0,x), \overline{H}_n(x)=\overline{H}(H_n(x))
		\end{equation*}
		and
		\begin{equation*}
			\widetilde{H}_0(x)=u(\tau_2;0,x), \widetilde{H}_n(x)=\widetilde{H}(H_n(x)), n=1,2,3,\cdots.
		\end{equation*}
		If $H(x)>x$, we can show $H_{i+1}(x)>H_{i}(x)$, for $i=1,2,3,\cdots$. In fact, if it is not true, then there exists positive integer $n_0$ such that  $H_{i+1}(x)>H_{i}(x)$, for $i=1,2,\cdots,n_0-1$, but $H_{n_{0}}(x)\geq H_{n_0+1}(x)$. Write $\bar{u}(t):=u(T+t;0,x).$ Since system \eqref{main1.4} is $T$-periodic, $\bar{u}(t)$ is also solution of system \eqref{main1.4}. Noting that $\bar{u}(0)=u(T;0,x)=u(T)=H(x)$, we can rewrite $\bar{u}(t)=u(t;0,H(x))$. Comparing $u(t)$ with $\bar{u}(t)$, we have
		\begin{equation*}
			u((n_0-1)T;0,x)=H_{n_0-1}(x)<H_{n_0}(x)=u(n_0T;0,x)=\bar{u}((n_0-1)T)
		\end{equation*}
		\begin{equation*}
			u(n_0T;0,x)=H_{n_0}(x)\geq H_{n_0+1}(x)=u((n_0+1)T;0,x)=\bar{u}(n_0T).
		\end{equation*}
		Hence $u(t)$ with $\bar{u}(t)$ must intersect on $((n_0-1)T,n_0T]$. However, in an autonomous system, solutions starting from different initial values cannot intersect, which leads to a contradiction. We have proved that $\left\{ H_{n}(x) \right\}_{0}^{\infty}$ is strictly increasing.
		
		Now we can show that $\overline{H}_{i+1}(x)>\overline{H}_{i}(x)$ for $i=0,1,2,\cdots$. In fact, if it is not true, then there exists positive integer $n_0$ such that  $\overline{H}_{n_0}(x)\geq \overline{H}_{n_0+1}(x)$, which implies that   $u(n_0T+\tau_1)\geq u((n_0+1)T+\tau_1)$. After $T-\tau_1$, it becomes $u((n_0+1)T)\geq u((n_0+2)T)$, i.e. $H_{n_0+1}(x)\geq H_{n_0+2}(x)$, which is in contradiction with the monotonicity of $\left\{ H_{n}(x) \right\}_{0}^{\infty}$. Similarly, we can prove $\left\{ \widetilde{H}_{n}(x) \right\}_{0}^{\infty}$ is strictly increasing.
		Similarly, we can give the proof of  $(\mathbf{ii})$ and $(\mathbf{iii})$.
		
		$(\mathbf{iv})$ If $H(x)=x$, then conclusion follows immediately.
		Suppose $H(x)>x$. Then from $(\mathbf{i})$, sequence  $\left\{ H_{n}(x) \right\}_{0}^{\infty}$ is strictly increasing. We have $0<x<1$ and $0<H_n(x)<1,n=0,1,2,\cdots$. Hence,  $\left\{ H_{n}(x) \right\}_{0}^{\infty}$ is as monotonically increasing sequence with an upper bound. Then there exists $p_1 \geq 0$ such that $\lim_{n \to \infty} H_{n}(x) = p_{1}$. Since $H_{n+1}(x)=H(H_n(x))$, taking the limit of both sides, we have $H(p_1)=p_1$.
		The proof for the case of $H(x)<x$ can be done similarly. 	
	\end{proof}
	In order to obtain the existence of positive periodic solution for system \eqref{main1.4}, it is necessary to investigate the properties for $H(x)$.
	\begin{lemma} \label{lem.tao2}
		Assume that initial value $x>0$.  Then we have
		\begin{equation*}
			\lim_{x \to 0} \frac{H(x)}{x} = e^{c_1(\tau_2-\tau_2^*)},
		\end{equation*}
		where $\tau_2^*:=\frac{(d_1+1)\tau_1+(c_1-1)T}{c_1}.$
	\end{lemma}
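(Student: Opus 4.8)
The plan is to reduce the limit to a derivative computation. Since $u\equiv 0$ solves each of the three phases of \eqref{main1.4}, the origin is fixed by the period map, i.e. $H(0)=u(T;0,0)=0$; combined with the continuous differentiability of $H$ recorded above, this gives
\[
\lim_{x\to 0}\frac{H(x)}{x}=\lim_{x\to 0}\frac{H(x)-H(0)}{x-0}=H'(0),
\]
so it suffices to evaluate $H'(0)$.

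First I would factor the period map into its three phase flows. Write $H=\Phi_3\circ\Phi_2\circ\Phi_1$, where $\Phi_1$ is the time-$\tau_1$ flow of $u'=-d_1u$, $\Phi_2$ the time-$(\tau_2-\tau_1)$ flow of $u'=u(1-u)$, and $\Phi_3$ the time-$(T-\tau_2)$ flow of $u'=u(1-u)-c_1u=u(1-c_1-u)$. Each $\Phi_i$ fixes the origin, so the chain rule yields $H'(0)=\Phi_3'(0)\,\Phi_2'(0)\,\Phi_1'(0)$. I would then evaluate each factor through the variational equation along the trivial solution $u\equiv 0$: for a scalar autonomous equation $u'=f(u)$ with $f(0)=0$, the derivative of its time-$s$ flow at the equilibrium solves $\dot w=f'(0)\,w$, $w(0)=1$, hence equals $e^{f'(0)s}$. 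Applying this with $f'(0)=-d_1$ on phase one, $f'(0)=1$ on phase two, and $f'(0)=1-c_1$ on phase three gives $\Phi_1'(0)=e^{-d_1\tau_1}$, $\Phi_2'(0)=e^{\tau_2-\tau_1}$ and $\Phi_3'(0)=e^{(1-c_1)(T-\tau_2)}$.

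Finally I would multiply and regroup the exponent:
\[
H'(0)=\exp\!\big(-d_1\tau_1+(\tau_2-\tau_1)+(1-c_1)(T-\tau_2)\big)=\exp\!\big(c_1\tau_2-(d_1+1)\tau_1-(c_1-1)T\big),
\]
which is exactly $e^{c_1(\tau_2-\tau_2^*)}$ for $\tau_2^*=\big[(d_1+1)\tau_1+(c_1-1)T\big]/c_1$. There is no genuine obstacle here: the substance is the linearization-plus-chain-rule bookkeeping, and the only thing to watch is the final algebraic collection of the exponent. If one instead prefers the fully explicit route---integrating the logistic-type equations in closed form and letting $x\to 0$---the one extra point of care is the degenerate case $c_1=1$ in phase three, where $u'=-u^2$ must be integrated directly; the resulting ratio still tends to $1=e^{(1-c_1)(T-\tau_2)}$, consistent with the formula, whereas the variational argument above avoids this case distinction altogether.
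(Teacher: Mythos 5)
Your proof is correct, but it takes a genuinely different route from the paper's. The paper proves the lemma by explicit integration of each phase in closed form: it solves the linear phase to get $\overline{H}(x)=e^{-d_1\tau_1}x$ (equation \eqref{lm1.2}), transforms the logistic phase via $u/(1-u)$ to get \eqref{lm1.4}, and integrates the grazing phase via partial fractions to get \eqref{lm1.6} when $c_1\neq 1$ (and via $d(1/u)=dt$, giving \eqref{lm1.9}, when $c_1=1$), then passes to the limit in the telescoping product $\frac{H(x)}{\widetilde{H}(x)}\cdot\frac{\widetilde{H}(x)}{\overline{H}(x)}\cdot\frac{\overline{H}(x)}{x}$, with a separate case analysis for $c_1=1$. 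You instead identify the limit with $H'(0)$ and evaluate it through the chain rule and the variational equation along the trivial solution, which is uniform in $c_1$ and eliminates the case split entirely; notably, this is essentially the same computation the paper itself performs later, in the proof of Theorem \ref{lem.u.solu}, where $DP(0)$ is shown to have the eigenvalue $e^{c_1(\tau_2-\tau_2^*)}$ by exactly this variational argument. What your approach buys is economy and uniformity; what the paper's explicit route buys is the closed-form relations \eqref{lm1.2}, \eqref{lm1.4}, \eqref{lm1.6} and \eqref{lm1.9} themselves, which are not incidental to this lemma but are reused downstream --- in the uniqueness part of Theorem \ref{lem.u.solu} (via \eqref{lm3.11} and the resulting formula for $H'$) and in the extinction Lemma \ref{lem.u.0}. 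One small point you should make explicit: the paper records continuous differentiability of $H$ only for initial values $x>0$, so your reduction of the limit to $H'(0)$ needs the standard fact that the flow maps of these smooth vector fields are $C^1$ up to the fixed point $x=0$ (one-sided differentiability suffices, since the limit is taken as $x\to 0^+$); with that observation supplied, the argument is complete.
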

	\begin{proof}
		We solve the first equation of \eqref{main1.4} with $u(0)=x$. It follows from
		\begin{equation*}
			\frac{du}{dt} = -d_1 u
		\end{equation*}
		that
		\begin{equation} \label{lm1.1}
			d(\ln{|u|})=-d_{1}dt.
		\end{equation}		
		\noindent  Integrating \eqref{lm1.1} from $0$ to $\tau_1$ yields
		\begin{equation*}
			u(\tau_1)=x \cdot e^{-d_1 \tau_1}.
		\end{equation*}
		\noindent  Let $m_1 := e^{-d_1 \tau_1}.$
		We have
		\begin{equation} \label{lm1.2}
			\overline{H}(x)= m_1 x,
		\end{equation}
		which indicates that $\lim_{x \to 0}\frac{\overline{H}(x)}{x}=m_1.$
		
		Then we can solve the second equation of \eqref{main1.4} with $u(\tau_1^+)=u(\tau_1)$. It follows from
		\begin{equation*}
			\frac{du}{dt} = u(1-u)
		\end{equation*}
		that
		\begin{equation} \label{lm1.3}
			d(\ln{|\frac{u}{1-u}|})=dt.
		\end{equation}
		
		\noindent  Integrating \eqref{lm1.3} from $\tau_1$ to $\tau_2$  yields
		\begin{equation*}
			\frac{u(\tau_2)}{1-u(\tau_2)} = \frac{u(\tau_1)}{1-u(\tau_1)} \cdot e^{\tau_2-\tau_1}.			
		\end{equation*}
		\noindent  Let $m_2 := e^{\tau_2-\tau_1}$ and we have
		\begin{equation} \label{lm1.4}
			\frac{\widetilde{H}(x)}{1-\widetilde{H}(x)}=\frac{\overline{H}(x)}{1-\overline{H}(x)} \cdot m_2.
		\end{equation}	
		
		We continue to solve the third equation of \eqref{main1.4} with $u(\tau_2^+)=u(\tau_2)$.  If $c_1 \neq 1$, it follows that
		\begin{equation} \label{lm1.5}
			(\frac{1}{u}-\frac{1}{1-u-c_1})du=(1-c_1)dt.
		\end{equation}
		Integrating \eqref{lm1.5} from $\tau_2$ to $T$  yields
		\begin{equation*}
			\frac{u(T)}{1-u(T)-c_1}=\frac{u(\tau_2)}{1-u(\tau_2)-c_1} \cdot e^{(1-c_1)(T-\tau_2)},			
		\end{equation*}
		witch implies that
		\begin{equation} \label{lm1.6}
			\frac{H(x)}{1-H(x)-c_1} = \frac{\widetilde{H}(x)}{1-\widetilde{H}(x)-c_1} \cdot e^{(1-c_1)(T-\tau_2)}.
		\end{equation}
		We have
		\begin{equation*}
			\lim_{x \to 0}\frac{H(x)}{\widetilde{H}(x)}=e^{(1-c_1)(T-\tau_2)}.			
		\end{equation*}
		Hence,
		\begin{equation}
			\begin{aligned}  \label{lm1.7}
				\lim_{x \to 0} \frac{H(x)}{x} &= \lim_{x \to 0} \frac{H(x)}{\widetilde{H}(x)} \cdot \frac{\widetilde{H}(x)}{\overline{H}(x)} \cdot \frac{\overline{H}(x)}{x} \\
				& = e^{(1-c_1)(T-\tau_2)+\tau_2-\tau_1-d_1\tau_1} \\
				&= e^{c_1(\tau_2-\tau_2^*)},
			\end{aligned}
		\end{equation}
		where
		\begin{equation} \label{tao2}
			\tau_2^*=\frac{(d_1+1)\tau_1+(c_1-1)T}{c_1}.
		\end{equation}
		
		If $c_1=1$ , we have $\frac{du}{dt}=-u^2$. Then
		\begin{equation} \label{lm1.8}
			d(\frac{1}{u}) = dt.
		\end{equation}
		
		\noindent  Integrating \eqref{lm1.8} from $\tau_2$ to $T$  yields
		\begin{equation} \label{lm1.9}
			\frac{1}{H(x)} - \frac{1}{\widetilde{H}(x)} = T-\tau_2.
		\end{equation}
		
		\noindent  Hence,
		\begin{equation} \label{lm1.10}
			\lim_{x \to 0} \frac{H(x)}{x} = \lim_{x \to 0} \frac{H(x)}{\widetilde{H}(x)} \cdot \frac{\widetilde{H}(x)}{\overline{H}(x)} \cdot \frac{\overline{H}(x)}{x} = e^{\tau_2-\tau_1-d_1\tau_1}=e^{\tau_2-(d_1+1)\tau_1}.
		\end{equation}
	\end{proof}
	
	\begin{lemma}\label{lem.H<x}
		If  the initial value $x \geq 1$, then $H(x) < x$.
	\end{lemma}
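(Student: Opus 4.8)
The plan is to track the trajectory $u(t)=u(t;0,x)$ through the three successive phases on $(0,\tau_1]$, $(\tau_1,\tau_2]$ and $(\tau_2,T]$, reading the behaviour off each scalar ODE qualitatively rather than from the explicit integrals of Lemma~\ref{lem.tao2}; those integrals are obtained on the branch $u<1$, whereas here $x\ge 1$ may keep $u\ge 1$, where $u/(1-u)$ changes sign. I only need three structural facts. First, on $(0,\tau_1]$ the decay $u'=-d_1u$ contracts the value by $m_1=e^{-d_1\tau_1}<1$, so $\overline H(x)=m_1x<x$. Second, the logistic field $u'=u(1-u)$ on $(\tau_1,\tau_2]$ has $u=1$ as an equilibrium, so a trajectory can never cross the line $u=1$: it decreases towards $1$ from above and increases towards $1$ from below. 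Third, the grazed field $u'=u(1-c_1-u)$ on $(\tau_2,T]$ has its only positive equilibrium at $u=1-c_1<1$ when $c_1<1$ and none at all when $c_1\ge 1$, so $u(t)\le\max\{u(\tau_2),\,1-c_1\}$ throughout this phase and $u$ strictly decreases whenever $u>1-c_1$.

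With these in hand I would split on the value $\overline H(x)=m_1x$ produced by the first phase. If $m_1x\ge 1$, the logistic phase keeps $u\ge 1$ and non-increasing, so $u(\tau_2)\le m_1x$ while $u(\tau_2)\ge 1$; since then $u(\tau_2)\ge 1>1-c_1$, the grazed phase makes $u$ strictly decrease, whence $H(x)=u(T)<u(\tau_2)\le m_1x<x$. If instead $m_1x<1$, the logistic phase keeps $u<1$, so $u(\tau_2)<1$, and the grazed phase then confines $u$ below $\max\{u(\tau_2),1-c_1\}<1$; hence $H(x)=u(T)<1\le x$. In both cases $H(x)<x$, which is the assertion.

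The step requiring care, and the reason a naive ``every phase lowers $u$'' argument fails, is the middle phase: when $m_1x<1$ the logistic equation \emph{increases} $u$, so the drop cannot be produced by composing contractions. The resolution is precisely the invariance of the line $u=1$ under the logistic flow: because $x\ge 1$, whatever growth occurs in the second phase stays below $1\le x$ and cannot overshoot the initial datum. Strictness of the final inequality is then automatic, coming from $m_1<1$ in the first case and from $u(\tau_2)<1\le x$ in the second, with the positivity $c_1>0$ of the grazing intensity only reinforcing the decay in the third phase. Because no explicit solution formula is used, the argument is insensitive to the sign of $1-c_1$ and to whether $\tau_1$ or $T-\tau_2$ happens to be small.
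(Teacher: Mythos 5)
Your proof is correct and takes essentially the same approach as the paper's: the dry-season phase contracts by $m_1=e^{-d_1\tau_1}<1$, and the second and third phases are logistic-type flows whose equilibria ($1$ and $1-c_1$, respectively) prevent the trajectory from ever rising above $\max\{u,1\}\le x$. The paper's proof is a two-line sketch of exactly this observation; your case split on $m_1x\ge 1$ versus $m_1x<1$ merely makes explicit the invariance and monotonicity details it leaves implicit.
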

	\begin{proof}
		It follows  from the first equation of system \eqref{main1.4} that $\overline{H}(x) < x $ for any initial value $ x\geq 1$. Note that both the second and third  equation of system \eqref{main1.4} are Logistic-type differential equation with growth rate $1$ and $1-c_{1}$, respectively. We can obtain that $H(x)<x$ for any the initial value $x \geq 1$.
	\end{proof}
	\subsection{Existence and uniqueness of $T$-periodic solution}
 \ \ \ \ In this subsection, we discuss the existence and stability of positive $T$-periodic solution, which indicates that species $u$ or $v$ can survive.
	\begin{theorem}\label{lem.u.solu}
		Assume that $\tau_1 < \tau^{*}_1:=\frac{T}{d_{1}+1}$. If $ \tau_2 > \tau_2^*$, then system \eqref{main1.4} has a unique positive  globally asymptotically stable $T$-periodic solution.
	\end{theorem}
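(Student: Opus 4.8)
The plan is to study the period map $H(x)=u(T;0,x)$ of \eqref{main1.4}, since by Lemma~\ref{lemma2.1}$(\mathbf{ii})$ a positive $T$-periodic solution is precisely a positive fixed point of $H$. I will establish three facts: that $H$ has a positive fixed point $p_1$; that this positive fixed point is unique; and that it attracts every positive orbit. Together with Lemma~\ref{lemma2.1} these give a unique, globally asymptotically stable positive $T$-periodic solution in $Int(\mathbb{R}_+)$.

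For existence I would use the intermediate value theorem. Observe first that the hypothesis $\tau_1<\tau_1^*=\frac{T}{d_1+1}$ is equivalent to $\tau_2^*<T$, so that $\tau_2>\tau_2^*$ can indeed hold together with $\tau_2\le T$. Since $c_1>0$ and $\tau_2>\tau_2^*$, Lemma~\ref{lem.tao2} gives \[\lim_{x\to0^+}\frac{H(x)}{x}=e^{c_1(\tau_2-\tau_2^*)}>1,\] hence $H(x)>x$ for all sufficiently small $x>0$, while Lemma~\ref{lem.H<x} gives $H(1)<1$. As $H$ is continuous, $H(x)-x$ changes sign on $(0,1)$ and so vanishes at some $p_1\in(0,1)$; then $u(t;0,p_1)$ is a positive $T$-periodic solution lying in $Int(\mathbb{R}_+)$.

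The main obstacle is uniqueness, from which global stability will follow. My approach is to make the fractional-linear structure of $H$ explicit, as is already visible in the proof of Lemma~\ref{lem.tao2}. Integrating the three equations of \eqref{main1.4} in succession shows that the dry-season map is the linear $x\mapsto m_1 x$, while each logistic-type stage (growth on $(\tau_1,\tau_2]$ and grazing on $(\tau_2,T]$) is a M\"obius map $x\mapsto\frac{Ax}{1+Bx}$ with $A>0$ and $B\ge0$. This family is closed under composition, so $H(x)=\frac{\alpha x}{1+\beta x}$ with $\alpha=e^{c_1(\tau_2-\tau_2^*)}$ and $\beta\ge0$ a sum of nonnegative contributions. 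Because $\tau_1<\tau_1^*\le T$, the interval $(\tau_1,T]$ is nondegenerate, forcing at least one such contribution to be positive, so $\beta>0$. The fixed-point equation $x=\frac{\alpha x}{1+\beta x}$ then has exactly the roots $0$ and $\frac{\alpha-1}{\beta}$, and since $\alpha>1$ the latter is positive and unique; this is $p_1$. (Equivalently, one may avoid the explicit formula by noting that $H$ is increasing and strictly concave with $H(0)=0$ and $H'(0)=\alpha>1$, so its graph crosses the diagonal exactly once in $(0,\infty)$.)

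Finally, for global asymptotic stability I would appeal to Lemma~\ref{lemma2.1}$(\mathbf{iv})$: for every $x>0$ the iterates $\{H_n(x)\}$ converge to a fixed point of $H$, which by uniqueness must be $p_1$. Continuous dependence of the flow on the initial datum over each of the three subintervals then promotes this to convergence of the full solution $u(t;0,x)$ to the periodic solution $u(t;0,p_1)$. Local asymptotic stability is verified directly from $H'(p_1)=\frac{\alpha}{(1+\beta p_1)^2}=\frac1\alpha<1$, and combined with the global attraction just established this yields global asymptotic stability in $Int(\mathbb{R}_+)$, completing the argument.
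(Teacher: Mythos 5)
Your proposal is correct, and its core step is genuinely different from the paper's. For existence you argue exactly as the paper does: $\lim_{x\to0^+}H(x)/x=e^{c_1(\tau_2-\tau_2^*)}>1$ from Lemma~\ref{lem.tao2}, $H(x)<x$ for $x\ge1$ from Lemma~\ref{lem.H<x}, and the intermediate value theorem. But for uniqueness and stability you diverge: the paper only exploits the M\"obius structure halfway (it derives $\widetilde{H}(x)=\frac{m_1m_2x}{m_1(m_2-1)x+1}$ as in \eqref{lm3.11} but leaves the grazing stage implicit), computes $H'$ at putative fixed points, and rules out a second fixed point through the eight derivative-sign configurations \eqref{lm3.1}--\eqref{lm3.8} via the auxiliary function $P(x)$, repeating the whole analysis separately for $c_1=1$ with $F(x)=e^{1/x}$; it then proves global stability by a separate Floquet-type computation showing $DP(0)=e^{c_1(\tau_2-\tau_2^*)}>1$, i.e.\ instability of zero. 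Your observation that all three stage maps are of the form $x\mapsto\frac{Ax}{1+Bx}$ with $A>0$, $B\ge0$ (this holds for the grazing stage in all three regimes $c_1<1$, $c_1=1$, $c_1>1$, since for $c_1\neq1$ one gets $B=\frac{e^{(1-c_1)(T-\tau_2)}-1}{1-c_1}\ge0$), and that this family is closed under composition, yields the single closed form $H(x)=\frac{\alpha x}{1+\beta x}$ with $\alpha=e^{c_1(\tau_2-\tau_2^*)}>1$ and $\beta>0$, from which uniqueness of the positive fixed point $p_1=\frac{\alpha-1}{\beta}$ and local stability $H'(p_1)=\frac{1}{\alpha}<1$ are immediate. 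This is shorter, unifies the $c_1=1$ and $c_1\neq1$ cases that the paper treats separately, and replaces both the case analysis and the instability-of-zero computation. One small point to tighten: when you invoke Lemma~\ref{lemma2.1}$(\mathbf{iv})$ you should rule out the limit $0$, since $0$ is also fixed by the extended map; this is easy from monotonicity of $H$ in the initial datum, because for $x<p_1$ the iterates increase (so the limit is $\ge x>0$), while for $x>p_1$ they decrease but satisfy $H_n(x)>p_1$ (alternatively: convergence to $0$ would contradict $H(y)>y$ for small $y>0$). With that one-line remark added, your argument is complete.
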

	\begin{proof}
		$(\mathbf{i})$
		When $c_1 \neq 1$,  it follows from \eqref{lm1.7} and $\tau_2 > \tau_2^*$ that $\lim_{x \to 0} \frac{H(x)}{x} > 1$. There exists a sufficiently small $ \varepsilon > 0$ such that
		\begin{equation*}
			H(x) > x  \text{  for } x \in (0, \varepsilon).
		\end{equation*}
		From lemma \ref{lem.H<x}, we know that when $x \geq 1 $, $ H(x) < x$. Hence, there exists $x_0 \in [0, \varepsilon) $ such that $H(x_0) = x_0$,
		which indicates that $u(t; 0, x_0)$ is a $T$-periodic solution of system \eqref{main1.4}.
		
		Now we prove the uniqueness of the $T$-periodic solution of system \eqref{main1.4}.
		Assume that system \eqref{main1.4} has another $T$-periodic solution $x_1$, i.e. $H(x_1) = x_1$. Note that $\lim_{x \to 0} \frac{H(x)}{x} > 1$ when $\tau_2 > \tau_2^*$, and $ H(x) < x$ when $ x \geq 1 $.
		Without loss of generality, we assume $0<x_0<x_1<1$, where $x_0$ is the smallest fixed point of $H(x)$ and $x_1$ is the largest fixed point of $H(x)$ such that
		\begin{equation*}
			H(x) > x, \ x \in (0, x_0)\ \text{ and }\ H(x) < x , \ x \in (x_1, 1).
		\end{equation*}
		\noindent
		Then there must exist $x_2 \in [x_0, x_1] $ satisfying the following conditions.
		
		\noindent
		\textbf{Case 1:}  $ x_0 < x_2 < x_1 $,
		\begin{equation} \label{lm3.1}
			H'(x_0) = 1 , \ H'(x_2) = 1 , \ H'(x_1) < 1,
		\end{equation}
		or \begin{equation} \label{lm3.2}
			H'(x_0) = 1 ,\ H'(x_2) < 1 ,\ H'(x_1) = 1,
		\end{equation}
		or \begin{equation} \label{lm3.3}
			H'(x_0) < 1 ,\ H'(x_2) > 1 ,\ H'(x_1) < 1,
		\end{equation}
		or \begin{equation} \label{lm3.4}
			H'(x_0) < 1 ,\ H'(x_2) = 1 ,\ H'(x_1) = 1,
		\end{equation}
		
		\noindent  \textbf{Case 2:} $x_2 = x_0$,
		\begin{equation} \label{lm3.5}
			\ H'(x_0) =H'(x_2)= 1 ,\ H'(x_1) < 1,
		\end{equation}
		or \begin{equation} \label{lm3.6}
			\ H'(x_0)= H'(x_2)< 1 ,\ H'(x_1) = 1,
		\end{equation}
		
		\noindent  \textbf{Case 3:} $x_2 = x_1$,
		\begin{equation} \label{lm3.7}
			\ H'(x_0) = 1 ,\ H'(x_1)=H'(x_2) < 1,
		\end{equation}
		or \begin{equation} \label{lm3.8}
			\ H'(x_0) < 1 ,\ H'(x_1)=H'(x_2) = 1,
		\end{equation}
		
		Let
		\begin{equation*}
			F(x) = \frac{x}{1 - c_1 - x} .
		\end{equation*}
		Then we have
		\begin{equation*}
			F'(x) = \frac{1 - c_1}{x(1 - c_1 - x)} F(x),
		\end{equation*}
		and \noindent  \eqref{lm1.6} can be expressed as
		\begin{equation} \label{lm3.9}
			F(H(x)) = F(\widetilde{H}(x)) e^{(1 - c_1)(T - \tau_2)}.
		\end{equation}
		\noindent  Taking the derivative with respect to $x$ in \eqref{lm3.9},
		we get
		\begin{equation*}
			F'(H(x)) \cdot H'(x) = F'(\widetilde{H}(x)) \cdot \widetilde{H}'(x) \cdot e^{(1 - c_1)(T - \tau_2)}.
		\end{equation*}
		\noindent  which indicates that
		\begin{equation*}
			\frac{1 - c_1}{H(x)(1 - c_1 - H(x))} \cdot H'(x) = \frac{1 - c_1}{\widetilde{H}(x)(1 - c_1 - \widetilde{H}(x))} \cdot \widetilde{H}'(x).
		\end{equation*}
		\noindent  From \eqref{lm1.4}, we know that
		\begin{equation} \label{lm3.10}
			\widetilde{H}(x) = \frac{m_2 \overline{H}(x)}{(m_2 - 1) \overline{H}(x) + 1},
		\end{equation}
		and it follows from \eqref{lm1.2} that
		\begin{equation} \label{lm3.11}
			\widetilde{H}(x) = \frac{m_1 m_2 x}{m_1(m_2 - 1) x + 1}.
		\end{equation}
		\noindent  Taking the derivative with respect to $x$ in \eqref{lm1.4},
		we get
		\begin{equation} \label{lm3.12}
			\frac{\widetilde{H}'(x)}{(1 - \widetilde{H}(x))^2} = \frac{m_2 \overline{H}'(x)}{(1 - \overline{H}(x))^2},
		\end{equation}
		and it follows from  $\overline{H}'(x) = m_1$
		that
		\begin{equation} \label{lm3.13}
			\widetilde{H}'(x) = \frac{m_1 m_2}{(1 - m_1 x)^2} \cdot (1 - \widetilde{H}(x))^2.
		\end{equation}
		
		\noindent  Thus
		\begin{equation*}
			H'(x) = \frac{H(x)(1 - c_1 - H(x))}{x \cdot [(1 - c_1)(m_1( m_2 - 1) x + 1)- m_1 m_2 x]},
		\end{equation*}
		which implies that
		\begin{equation*}
			H'(x_0)=\frac{1 - c_1 - x_0}{(1 - c_1)(m_1( m_2 - 1) x_0 + 1)- m_1m_2x_0},
		\end{equation*}
		\begin{equation*}
			H'(x_1)=\frac{1 - c_1 - x_1}{(1 - c_1)(m_1( m_2 - 1) x_1 + 1)- m_1m_2x_1},
		\end{equation*}
		\begin{equation*}
			H'(x_2)=\frac{1 - c_1 - x_2}{(1 - c_1)(m_1( m_2 - 1) x_2 + 1)- m_1m_2x_2}.
		\end{equation*}
		
		Set
		\begin{equation*}
			P(x)= - (c_1 m_1 m_2 +(1-c_1)m_1-1)x.
		\end{equation*}
		\noindent If $(1 - c_1)(m_1( m_2 - 1) x_2 + 1)- m_1m_2x_2 < 0$, then conditions \eqref{lm3.1}, \eqref{lm3.2}, \eqref{lm3.3} and \eqref{lm3.4} can be equivalently written as
		\begin{equation*}
			P(x_0)=0 , \ P(x_2)=0 ,\ P(x_1)<0,
		\end{equation*}
		\begin{equation*}
			P(x_0)=0,\ P(x_2)<0,\ P(x_1)=0,
		\end{equation*}
		\begin{equation*}
			P(x_0)<0,\ P(x_2)>0,\ P(x_1)<0,
		\end{equation*}
		and
		\begin{equation*}
			P(x_0)<0,\ P(x_2)=0,\ P(x_1)=0,
		\end{equation*}
		respectively.
		\noindent Similarly, \eqref{lm3.5} and \eqref{lm3.6} can be equivalently written as
		\begin{equation*}
			P(x_0)=0,\ P(x_2)=0,\ P(x_1)<0,
		\end{equation*}
		and
		\begin{equation*}
			P(x_0)<0,\ P(x_2)<0,\ P(x_1)=0,
		\end{equation*}
		respectively. \eqref{lm3.7} and \eqref{lm3.8} can be equivalently written as
		\begin{equation*}
			P(x_0)=0,\ P(x_2)<0,\ P(x_1)<0\
		\end{equation*}
		and
		\begin{equation*}
			P(x_0)<0,\ P(x_2)=0,\ P(x_1)=0,
		\end{equation*}
		respectively. It is easy to see that  \eqref{lm3.1}-\eqref{lm3.8} cannot happen.
		If $(1 - c_1)(m_1( m_2 - 1) x_2 + 1)- m_1m_2x_2 > 0$, the sign of $P(x)$ is the opposite, we can similarly prove formulas \eqref{lm3.1}-\eqref{lm3.8} cannot happen.
		
		Now, we prove the  global stability of the $T$-periodic solution of system \eqref{main1.4}. We need only prove that the zero solution is locally unstable.
		Let $\{Q_{1t}\}_{t \geq 0}$ , $\{Q_{2t}\}_{t \geq 0}$ and $\{Q_{3t}\}_{t \geq 0}$ be the solution semiflows of the first, second and third equations of system \eqref{main1.4}, respectively. Let $DP(0)$ be the Jacobian derivative of $P$ at $0$. Denote by $f_1$, $f_2$ and $f_3$ the right-hand side vector fields of system \eqref{main1.4}, respectively. Then
		\begin{equation*}
			Df_1(0) = -d_1 , Df_2(0) = 1, Df_3(0) = 1-c_1 ,
		\end{equation*}
		Write $U_1(t, 0):=Q_{1t}(0)$, $ U_2(t, 0):=Q_{2t}(0)$ and $ U_3(t, 0):=Q_{3t}(0)$. Following the idea in \cite{Hsu.Zhao}, let $V_1(t, 0) = D U_1(t, 0)$, $V_2(t, 0) = DU_2(t, 0)$ and $V_3(t, 0) = DU_3(t, 0)$. Then the matrix functions $ V_1 := V_1(t, 0)$, $V_2 := V_2(t,0)$ and $V_3 := V_3(t, 0)$ satisfy
		\begin{equation*}
			\frac{dV_1(t)}{dt} = Df_1(U_1(t, 0))V_1(t), \quad V_1(0) = I,
		\end{equation*}
		\begin{equation*}
			\frac{dV_2(t)}{dt} = Df_2(U_2(t,0))V_2(t), \quad V_2(0) = I,
		\end{equation*}
		\begin{equation*}
			\frac{dV_3(t)}{dt} = Df_3(U_3(t,0))V_3(t), \quad V_3(0) = I.
		\end{equation*}			
		\noindent  It follows from the chain rule that
		\begin{equation*}
			DP(0) = DQ_{3(T-\tau_2)}(DQ_{2(\tau_2-\tau_1)}(Q_{1\tau_1}(0))) \cdot DQ_{2(\tau_2-\tau_1)}(Q_{1\tau_1}(0))\cdot DQ_{1\tau_1}(0).
		\end{equation*}
			\noindent  Then
		\begin{equation*}
			\begin{split}
				DQ_{1\tau_1}(0) &= V_1(\tau_1, 0), \\
				DQ_{2(\tau_2-\tau_1)}(Q_{1\tau_1(0)}) &= V_2(\tau_2-\tau_1, Q_{1\tau_1}(0)), \\
				DQ_{3(T-\tau_2)}(DQ_{2(\tau_2-\tau_1)}(Q_{1\tau_1}(0))) &= V_3(T-\tau_2, Q_{2(\tau_2-\tau_1)}, Q_{1\tau_1}(0))).
			\end{split}
		\end{equation*}

		Note that  $U_1(t, 0) = (u_1^*(t), 0)$ for $t \in (0, \tau_1]$.
		Thus
		\begin{equation*}
			Df_1(U_1(t, 0)) = -d_1,
		\end{equation*}			
		and consequently,
		\begin{equation*}
			V_1(\tau_1,0) = e^{\int_{0}^{\tau_1} -d_1 dt}.
		\end{equation*}
		Similarly, it follows from $U_2\left(t, Q_{1\tau_1}(0)\right) = \left(u_{2}^{*}(t), 0\right)$ for $t \in (\tau_1, \tau_2] $ that
		\begin{equation*}
			Df_2(U_2(t,  Q_{1\tau_1}(0))) = 1,
		\end{equation*}
		and hence
		\begin{equation*}
			V_2(\tau_2-\tau_1,Q_{1\tau_1}(0)) = e^{\int_{0}^{\tau_2-\tau_1}  dt} .
		\end{equation*}
		
		It follows from $U_3(t,Q_{2(\tau_2-\tau_1)}(Q_{1\tau_1}(0))) = (u_{3}^{*}(t), 0)$ for $t \in (\tau_2 , T] $ that
		\begin{equation*}
			Df_3(U_3(t, Q_{2(\tau_2-\tau_1)}(Q_{1\tau_1}(0)))) = 1-c_1,
		\end{equation*}
		and hence
		\begin{equation*}
			V_3(T-\tau_2, Q_{2(\tau_2-\tau_1)}, Q_{1\tau_1}(\omega_1))) = e^{\int_{0}^{T-\tau_2} (1-c_1) dt}.
		\end{equation*}
		Hence $DP(0)$ has  a positive eigenvalue
		\begin{equation*}
			r=e^{\int_0^{T-\tau_2}{(1 - c_1)}dt} \cdot e^{\int_0^{\tau_2-\tau_1}{}dt} \cdot e^{-d_1\tau_1} = e^{c_1(\tau_2-\tau_2^{*})} >1.
		\end{equation*}
		Apparently, the zero solution is unstable.
		
		$(\mathbf{ii})$
		When $c_1 = 1$ , it follows from \eqref{lm1.10} and $\tau_2 > (d_1+1)\tau_1$ that $\lim_{x \to 0} \frac{H(x)}{x} > 1$. There exists a sufficiently small $ \varepsilon > 0$, such that
		\begin{equation*}
			H(x) > x \text{ for } x \in (0, \varepsilon).
		\end{equation*}
		From Lemma \ref{lem.H<x}, we know that when $x \geq 1$ , $ H(x) < x$. Hence, there exists $x_0 \in [\varepsilon,1) $  such that $H(x_0) = x_0$, which indicates that $u(t; 0, x_0)$ is a $T$-periodic solution of system \eqref{main1.4}.
		
		Now we prove the uniqueness of the $T$-periodic solution of system \eqref{main1.4}.
		Assume that system \eqref{main1.4} has another $T$-periodic solution $x_1$, i.e. $H(x_1) = x_1$. Note that $ \lim_{x \to 0} \frac{H(x)}{x} > 1$ when $\tau_2 > \tau_2^*$, and $H(x) < x$ when $x \geq 1$.
		Without loss of generality, we assume $0 < x_0 < x_1 < 1$, where $x_0$ is the smallest fixed point of $H(x)$ and $x_1$ is the largest fixed point of $H(x)$ such that
		\begin{equation*}
			H(x) > x, \ x \in (0, x_0)\ \text{ and }\ H(x) < x , \ x \in (x_1, 1).
		\end{equation*}
		\noindent
		Then there must exist $x_2 \in [x_0, x_1]$ satisfying  conditions \eqref{lm3.1}-\eqref{lm3.8} .
		Let
		\begin{equation*}
			F(x) = e^{\frac{1}{x}},
		\end{equation*}
		Then
		\begin{equation*}
			F'(x) = -\frac{1}{x^2} F(x).
		\end{equation*}
		
		\noindent and \eqref{lm1.9} can be expressed as
		\begin{equation} \label{lm3.14}
			F(H(x)) = F(\widetilde{H}(x)) e^{T - \tau_2}.
		\end{equation}
		\noindent  Taking the derivative with respect to $x$ in \eqref{lm3.14},
		we get
		\begin{equation*}
			F'(H(x)) \cdot H'(x) = F'(\widetilde{H}(x)) \cdot \widetilde{H}'(x) \cdot e^{T - \tau_2},
		\end{equation*}
		\noindent  which indicates that,
		\begin{equation*}
			\frac{H'(x)}{H^2(x)}= \frac{\widetilde{H}'(x)}{\widetilde{H}^2(x)} .
		\end{equation*}
		\noindent  Thus we have
		\begin{equation*}
			H'(x) = \frac{H^2(x)}{m_1 m_2 x^2 }.
		\end{equation*}
		\noindent  We further obtain
		\begin{equation*}
			H'(x_0)=\frac{1}{m_1 m_2}, \ \ H'(x_1)=\frac{1}{m_1 m_2},\ \  H'(x_2)=\frac{1}{m_1 m_2}.
		\end{equation*}
		\noindent  It is easy to see that \eqref{lm3.1}-\eqref{lm3.8} cannot happen.
		
		Similarly to $(\mathbf{i})$, we can show that the zero solution is also unstable.
		From the results above, we know that the unique $T$-periodic solution for system \eqref{main1.4} is globally asymptotically stable.
	\end{proof}

	Similarly, we can obtain the following results for system \eqref{main1.5} and omit the proof.
	Set $\tau^{**}_1:=\frac{rT}{d_{2}+r}$ and  $\tau^{**}_2:=\frac{(d_2+r)\tau_1+(c_2-r)T}{c_2}$.

	\begin{theorem}\label{lem.v.solu}
		Assume that $\tau_1 < \tau^{**}_1$.   If $\tau_2 > \tau_2^{**}$, system \eqref{main1.5} has a unique positive $T$-periodic solution, which is globally asymptotically stable.
	\end{theorem}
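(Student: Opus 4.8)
The plan is to mirror the proof of Theorem \ref{lem.u.solu} line by line, replacing the map $H$ by the Poincar\'e map $K(y)=v(T;0,y)$ of system \eqref{main1.5} and carrying along the extra factor $r$ together with the threshold $c_2$. First I would establish the $v$-analogue of Lemma \ref{lem.tao2}. Solving the three phases in turn with $v(0)=y$, the decay phase gives $\overline{K}(y)=n_1y$ with $n_1=e^{-d_2\tau_1}$; the logistic phase $\frac{dv}{dt}=rv(1-v)$ gives $\frac{\widetilde{K}(y)}{1-\widetilde{K}(y)}=\frac{\overline{K}(y)}{1-\overline{K}(y)}\,n_2$ with $n_2=e^{r(\tau_2-\tau_1)}$; and the grazing phase $\frac{dv}{dt}=v\bigl((r-c_2)-rv\bigr)$, when $c_2\neq r$, gives
\begin{equation*}
\frac{K(y)}{(r-c_2)-rK(y)}=\frac{\widetilde{K}(y)}{(r-c_2)-r\widetilde{K}(y)}\,e^{(r-c_2)(T-\tau_2)}.
\end{equation*}
Multiplying the three linearized ratios as $y\to 0$ yields
\begin{equation*}
\lim_{y\to 0}\frac{K(y)}{y}=e^{-d_2\tau_1+r(\tau_2-\tau_1)+(r-c_2)(T-\tau_2)}=e^{c_2(\tau_2-\tau_2^{**})},
\end{equation*}
so $\tau_2>\tau_2^{**}$ is exactly what makes this limit exceed $1$. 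The degenerate case $c_2=r$ (the counterpart of $c_1=1$) collapses the grazing phase to $\frac{dv}{dt}=-rv^2$, which I would integrate directly to obtain $\lim_{y\to 0}K(y)/y=e^{r(\tau_2-\tau_1)-d_2\tau_1}$ and the same threshold.

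Next comes the $v$-analogue of Lemma \ref{lem.H<x}: for $y\geq 1$ the decay phase strictly decreases $v$, and both remaining phases are logistic-type with carrying capacities $1$ and $1-c_2/r<1$ (the latter replaced by pure decay when $c_2\ge r$), so $v$ stays below $y$ throughout and hence $K(y)<y$ for $y\geq 1$. Together with $K(y)>y$ on a right-neighbourhood of $0$, applying the intermediate value theorem to $K(y)-y$ produces a fixed point $y_0\in(0,1)$, i.e. a positive $T$-periodic solution of system \eqref{main1.5}.

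For uniqueness I would repeat the derivative computation of Theorem \ref{lem.u.solu} with the auxiliary function $F(v)=\frac{v}{(r-c_2)-rv}$ (and $F(v)=e^{1/v}$ when $c_2=r$). Differentiating the phase-three relation and inserting the explicit forms $\widetilde{K}(y)=\frac{n_1n_2y}{1+n_1(n_2-1)y}$ and $\widetilde{K}'(y)=\frac{n_1n_2}{(1+n_1(n_2-1)y)^2}$ produced from $\overline{K}'(y)=n_1$, I get at any fixed point $K'(y)=\frac{r-c_2-ry}{(r-c_2)-n_1(c_2n_2+r-c_2)y}$; introducing the linear polynomial $P(y)=-\bigl(c_2n_1n_2+(r-c_2)n_1-r\bigr)y$ shows that $K'(y)-1$ has, on each sign-regime of the denominator, the constant sign of $P$, which is incompatible with every ordered sign pattern of $K'$ forced by \eqref{lm3.1}--\eqref{lm3.8}, so a second positive fixed point is impossible. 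Global asymptotic stability then follows as before: the linearized Poincar\'e map $DK(0)$ has the single eigenvalue $e^{-d_2\tau_1}e^{r(\tau_2-\tau_1)}e^{(r-c_2)(T-\tau_2)}=e^{c_2(\tau_2-\tau_2^{**})}>1$, so the zero solution is unstable, and combined with uniqueness and the monotone convergence of $\{K_n(y)\}$ from Lemma \ref{lemma2.1}$(\mathbf{iv})$ every orbit with $y>0$ converges to $y_0$. I would also record that the hypothesis $\tau_1<\tau_1^{**}=\frac{rT}{d_2+r}$ is precisely what guarantees $\tau_2^{**}<T$, so the admissible interval $(\tau_2^{**},T)$ for $\tau_2$ is nonempty. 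The main obstacle is this uniqueness step: one must carry out the $F$-differentiation cleanly and verify that the $P(y)$ sign analysis closes off all of \eqref{lm3.1}--\eqref{lm3.8}, tracking the factor $r$ and the sign of $(r-c_2)(n_1(n_2-1)y+1)-rn_1n_2y$ carefully, as the rest of the argument is a routine transcription of the $u$-case.
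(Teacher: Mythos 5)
Your proposal is correct and is exactly the route the paper intends: the paper omits this proof, stating it is obtained ``similarly'' to Theorem \ref{lem.u.solu}, and your transcription carries out that analogue faithfully, with the right substitutions ($n_1=e^{-d_2\tau_1}$, $n_2=e^{r(\tau_2-\tau_1)}$, equilibrium $(r-c_2)/r$ in the grazing phase, degenerate case $c_2=r$, and the eigenvalue $e^{c_2(\tau_2-\tau_2^{**})}$ at zero). Your verification that $\tau_1<\tau_1^{**}$ is equivalent to $\tau_2^{**}<T$, and your fixed-point derivative $K'(y)=\frac{(r-c_2)-ry}{(r-c_2)-n_1(c_2n_2+r-c_2)y}$ with the linear-through-the-origin $P(y)$ ruling out the sign patterns \eqref{lm3.1}--\eqref{lm3.8}, both check out.
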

	
	\begin{remark}\label{rem.(0,v)}
		From the results above, we know that system \eqref{main1.1}-\eqref{main1.3} has a unique semi-trivial $T$-periodic solution $(u^*(t), 0)$  if  $\tau_1 < \tau^{*}_1$ and  $\tau_2 > \tau_2^*$ and  $(0, v^*(t))$ if $\tau_1 < \tau^{**}_1$ and $\tau_2 > \tau_2 ^{**}$.\\
	\end{remark}

	\subsection{Extinction  conditions for single species}
 In this subsection, we continue to discuss the conditions of  extinction for single species $u$ and $v$.
	\begin{theorem} \label{lem.u.tao1}
		If $\tau_1 \geq \tau^{*}_1$, then all positive solutions of system \eqref{main1.4} tend to zero as $t \rightarrow +\infty$.
	\end{theorem}
	\begin{proof}
		We first study the following single-species model that incorporates seasonal succession
		\begin{equation}\label{eq1}
			\begin{cases}
				\displaystyle  \frac{du}{dt} = -d_1 u,  &\;\;  t \in (nT, nT + \tau_1],\\
				\displaystyle  \frac{du}{dt} = u (1 - u ),  &\;\;  t \in (nT + \tau_1, (n+1)T].
			\end{cases}
		\end{equation}	
		
		For any initial value $x>0$, it follow from the first and second equation of (\ref{eq1}) that $$u(\tau_1)=x\cdot e^{-d_1\tau_1}$$ and $$\frac{u(T)}{1-u(T)}=\frac{u(\tau_1)}{1-u(\tau_1)}\cdot e^{T-\tau_1}.$$
		Assume that equation \eqref{eq1} has a periodic solution, i.e., $u(T)=x$, which yields $(m_1 m_3-m_1)x=m_1 m_3-1$, where $m_1=e^{-d_1\tau_1}$ and $m_3:=e^{T-\tau_1}.$ To ensure $x>0$, we must have $m_1 m_3>1,$ i.e., $\tau_1 < \frac{T}{d_1+1}$. Hence, if $\tau_1 \geq \frac{T}{d_1+1}$, system \eqref{eq1} has no periodic solution, which implies that all positive solutions of system \eqref{eq1} tend to zero as $t \to \infty$.
		
		It should be noted that if the species $u$ go to extinction without grazing, there must have the same result for species $u$ in system \eqref{main1.4} when grazing is introduced.
	\end{proof}

	\begin{theorem}\label{lem.u.0}
		Assume that $\tau_1 < \tau^{*}_1$. If $\tau_2 \leq \tau_2^*$, then $H(x)<x$ for all $x>0$, i.e., all the positive solutions of system \eqref{main1.4} tend to zero as $t \rightarrow +\infty$.
	\end{theorem}
	\begin{proof}
		$(\mathbf{i})$
		When $c_1 \neq 1$, $\tau_2 = \tau_2^*$, it follows from Lemma \ref{lem.H<x} that $H(x) < x$ for any $x \geq 1$.
		For any $x \in (0,1),$ according to \eqref{lm1.2}, \eqref{lm1.4} and \eqref{lm1.6}, we see that
		\begin{equation*}
			\frac{H(x)}{x} = \frac{1-c_1-H(x)}{1-c_1-\widetilde{H}(x)} \frac{1-\widetilde{H}}{1-m_1 x}.
		\end{equation*}
		\noindent  Set
		\begin{equation*}
			G(x) = \frac{1-x}{1-c_1-x}.
		\end{equation*}
		Then
		\begin{equation*}
			G'(x) = \frac{c_1}{(1-c_1-x)^2}>0
		\end{equation*}
		and
		\begin{equation*}
			\frac{H(x)}{x} = \frac{1-H(x)}{1-m_1 x} \frac{G(\widetilde{H}(x))}{G(H(x))}.
		\end{equation*}
		
		If $ \widetilde{H}(x) < H(x) $, according to monotonicity of $ G(x) $, we get $ G(\widetilde{H}(x)) < G(H(x)). $
		So
		\begin{equation*}
			\frac{H(x)}{x} < \frac{1-H(x)}{1-m_1 x} < \frac{1-H(x)}{1-x}.
		\end{equation*}
		Therefore, $H(x) < x , x \in (0,1)$.
		
		If $ \widetilde{H}(x) > H(x) $, then $ 1-c_1-H(x) < 1-c_1-\widetilde{H}(x). $ When $ 1-c_1-\widetilde{H}(x) < 0, i.e., \widetilde{H}(x) > 1-c_1, $ we have
		\begin{equation*}
			\frac{1-c_1-H(x)}{1-c_1-\widetilde{H}(x)} <1.
		\end{equation*}
		So
		\begin{equation*}
			\frac{H(x)}{x} < \frac{1-\widetilde{H}(x)}{1-m_1 x} < \frac{1-H(x)}{1-m_1 x} < \frac{1-H(x)}{1-x}.
		\end{equation*}
		Therefore, $H(x) < x , x \in (0,1)$.
		
		When  $ 1-c_1-\widetilde{H}(x) > 0, i.e., \widetilde{H}(x) < 1-c_1, $ so $ 0 < c_1 < 1.$ We can get $ m_1 m_2 < 1. $ So from \eqref{lm3.11}
		\begin{equation*}
			\frac{H(x)}{x} < \frac{\widetilde{H}(x)}{x} = \frac{m_1 m_2}{m_1(m_2 -1) x+1} < m_1 m_2 <1.
		\end{equation*}
		Therefore, $H(x) < x , x \in (0,1)$.
		
		$(\mathbf{ii})$
		When $c_1 \neq 1$, $\tau_2 < \tau_2^*$, from \eqref{lm1.7}, we know that $\lim_{x \to 0} \frac{H(x)}{x} < 1$. There exists a small $\varepsilon > 0$ such that $H(x) < x $ for $ x \in (0, \varepsilon]$.
		Also, from Lemma \ref{lem.H<x}, we know that when $x \geq 1$ , $ H(x) < x$. Assume there exists $\widetilde{x} \in (\varepsilon, 1) $ , such that $ H(\widetilde{x}) \geq \widetilde{x} $. Then there must exist $\hat{x} \in (\epsilon,\widetilde{x}]$ , such that $H(\hat{x})=\hat{x}$ ,$ H'(\hat{x}) \geq 1.$
		\noindent  While
		\begin{equation*}
			P(\hat{x})=-(c_1 m_1 m_2 +(1-c_1) m_2 -1)\hat{x}.
		\end{equation*}
		\noindent  If  $c_1 m_1 m_2 +(1-c_1) m_2 -1 > 0$, then $P(\hat{x})<0$. We get $H'(\hat{x})<1$, a contradiction.
		\noindent  If  $c_1 m_1 m_2 +(1-c_1) m_2 -1 < 0$, then $P(\hat{x})>0$. We get $H'(\hat{x})>1$ and we can conclude there exist $\overline{x} \in (\widetilde{x}, 1)$ ,  such that $H(\overline{x})=\overline{x}$ , $ H'(\overline{x})\leq 1$ , $ P(\overline{x})\leq 0$ , a contradiction.
		
		$(\mathbf{iii})$
		When $c_1=1$, $\tau_2 \leq \tau_2^*=(d_1+1)\tau_1$, we can get $m_1 m_2 \leq 1$.  From Lemma \ref{lem.H<x}, we know that when $x \geq 1 , H(x) < x $.			
		\noindent  For any $x \in (0,1),$ according to \eqref{lm1.2}, \eqref{lm1.4} and \eqref{lm1.8}, we see that
		\begin{equation*}
			H(x)  = \frac{\widetilde{H}(x)}{(T-\tau_2)\widetilde{H}(x)+1}
			= \frac{m_1 m_2 x}{m_1 m_2 (T-\tau_2)x+m_1(m_2-1)x+1}.
		\end{equation*}
		\noindent  Hence
		\begin{equation*}
			\frac{H(x)}{x}=\frac{1}{(T-\tau_2+1-e^{\tau_1-\tau_2})x+\frac{1}{m_1 m_2}}<1,
		\end{equation*}
		i.e., $H(x)<x.$ It follows from Lemma \ref{lemma2.1} that all positive solutions of system \eqref{main1.4} tend to zero as $t \rightarrow +\infty$. The proof is completed.
	\end{proof}
Similarly, we can obtain the following results for system \eqref{main1.5} and omit the proof..
	\begin{theorem}\label{lem.v.tao2}
		If $\tau_1 \geq \tau^{**}_1$, then all positive solutions of system \eqref{main1.5} tend to zero as $t \rightarrow +\infty $.
	\end{theorem}
	
	\begin{theorem}\label{lem.v0}
		Assume that $\tau_1 < \tau^{**}_1$.
		If $\tau_2 \leq \tau_2^{**}$, then $ K(y)<y $ for any $y>0,$ i.e., all the positive solutions of system \eqref{main1.5} tend to zero as $t \rightarrow +\infty$.
	\end{theorem}
	\begin{remark}
		Theorems \ref{lem.u.tao1} and \ref{lem.u.0} indicate that species $u$ tends to extinction if the duration of  dry season is excessively large ($\tau_1 > \tau^{*}_1$) or  the growth period is too short ($\tau_2 \leq \tau_2^{*}$) even if the duration of  dry season is small ($\tau_1 < \tau^{*}_1$).
		Similar results hold for species $v$ (see Theorems \ref{lem.v.tao2} and \ref{lem.v0}).
	\end{remark}

	\section{Competitive outcomes in two-species vegetation systems under seasonal grazing}
	 \ \ \ The single-species analysis above identifies thresholds for $\tau_{1}$ (dry season) and $\tau_{2}$ (grazing-related growth season) that determine vegetation persistence and extinction. In this section, we extend the single-species model to a two-species competition system to explore how grazing duration and intensity regulate competitive outcomes (e.g., exclusion, coexistence, or system collapse).
	\subsection{Competitive exclusion and global stability of semi-trivial solutions}	 \ \ \ \   Denote by $W(t; 0, \omega)$ the unique nonnegative global solution of system \eqref{main1.1}-\eqref{main1.3} on $[0, \infty)$ with the initial condition $\omega = (x, y)$. Let $\{Q_{1t}\}_{t \geq 0}$ , $\{Q_{2t}\}_{t \geq 0}$ and $\{Q_{3t}\}_{t \geq 0}$ be the solution semiflows associated with
	
	\begin{equation}\label{eq4.1}
		\begin{cases}
			\displaystyle  \frac{du}{dt} = -d_1 u,  \\
			\displaystyle  \frac{dv}{dt} = -d_2 v,  \\
			\displaystyle (u(0), v(0)) \in \mathbb{R}_+^2,
		\end{cases}
	\end{equation}
	\begin{equation}\label{eq4.2}
		\begin{cases}
			\displaystyle\frac{du}{dt} = u(1 - u - b_1 v) , \\
			\displaystyle\frac{dv}{dt} = r v(1 - v - b_2 u) , \\
			\displaystyle(u(0), v(0)) \in \mathbb{R}_+^2
		\end{cases}
	\end{equation}
	and
	\begin{equation}\label{eq4.3}
		\begin{cases}
			\displaystyle\frac{du}{dt} = u(1 - u - b_1 v) - c_1 u, \\
			\displaystyle\frac{dv}{dt} = r v(1 - v - b_2 u) - c_2 v, \\
			\displaystyle(u(0), v(0)) \in \mathbb{R}_+^2,
		\end{cases}
	\end{equation}
	respectively. Since system \eqref{main1.1}-\eqref{main1.3} is $T$-periodic, we consider its associated period map $P$ defined as $P(\omega) = W(T; 0, \omega)$ for $\omega \in \mathbb{R}^2$. Thus $P(\omega) = Q_{3(T-\tau_2)}(Q_{2(\tau_2-\tau_1)}(Q_{1\tau_1}(\omega)))$ for all $ \omega \in \mathbb{R}^2$, that is, $P = Q_{3(T-\tau_2)}(Q_{2(\tau_2-\tau_1)}(Q_{1\tau_1}))$.
	
	Under certain conditions, it follows from Lemma \ref{lem.u.solu} that there exists a unique $x_0 > 0$ such that $u^*(t) = u(t; 0, x_0)$ is the unique $T$-periodic solution of system \eqref{main1.4} with
	\begin{equation*}
		u^*(t) =
		\begin{cases}
			u_1^*(t), & t \in (0, \tau_1], \\
			u_2^*(t -  \tau_1), & t \in (\tau_1, \tau_2], \\
			u_3^*(t -  \tau_2), & t \in (\tau_2, T], \\
		\end{cases}
	\end{equation*}
	where $u_1^*(t)$ is the solution to the first equation of system \eqref{main1.4} with the initial value $u_1^*(0) = x_0$, $u_2^*(t)$ is the solution to the second equation of system \eqref{main1.4} with the initial value $u_2^*(0) = u_1^*(\tau_1) = \bar{H}(x_0)$, and $u_3^*(t)$ is the solution to the third equation of system \eqref{main1.4} with the initial value $u_3^*(0) = u_2^*(\tau_2-\tau_1) = \widetilde{H}(x_0)$.
	
	Similarly, there exists $y_0 > 0$ such that $v^*(t) = v(t; 0, y_0)$ is the unique $T$-periodic solution of system \eqref{main1.5} with
	\begin{equation*}
		v^*(t) =
		\begin{cases}
			v_1^*(t), & t \in (0, \tau_1], \\
			v_2^*(t -  \tau_1), & t \in (\tau_1, \tau_2], \\
			v_3^*(t -  \tau_2), & t \in (\tau_2, T],
		\end{cases}
	\end{equation*}
	where $v_1^*(t)$ is the solution to the first equation of system \eqref{main1.5} with the initial value $v_1^*(0) = y_0$, and $v_2^*(t) $ is the solution to the second equation of system \eqref{main1.5} with the initial value $v_2^*(0) = v_1^*(\tau_1) = \bar{K}(y_0)$, and $v_3^*(t)$ is the solution to the third equation of system \eqref{main1.5} with the initial value $v_3^*(0) = v_2^*(\tau_2-\tau_1) = \widetilde{K}(y_0)$.
	
	Write $\omega_1 = (x_0, 0)$ and $\omega_2 = (0, y_0)$. Under certain conditions, it follows from   that $W(t; 0, \omega_1)$ and $W(t; 0, \omega_2)$ are two semi-trivial $T$-periodic solutions of \eqref{main1.1}-\eqref{main1.3}.
	Let $DP(\omega_1)$ and $DP(\omega_2)$ be the Jacobian matrices of $P$ at $\omega_1$ and $\omega_2$ with their spectral radius $\rho(DP(\omega_1))$ and $ \rho(DP(\omega_2))$, respectively.

Then we have the following results concerning the local stability of the semitrivial fixed point $\omega_1$ and $\omega_2$ of the period map $P$.
	
	\begin{theorem}\label{lemma4.1}
		$(\mathbf{i})$Assume that  $\tau_1 < \tau^{*}_1$ and  $\tau_2 > \tau_2^*$. Then the semi-trivial $T$-periodic solution $(u^*(t), 0)$ of system \eqref{main1.1}-\eqref{main1.3}  exist and it is stable if and only if $\frac{\tau_2-\tau_2^{**}}{\tau_2-\tau_2^*}<\frac{rb_2c_1}{c_2}$.
		
		\((\mathbf{ii})\)Assume that $\tau_1 < \tau^{**}_1$ and $\tau_2 > \tau_2^{**}$. Then the semi-trivial T-periodic solution $(0,v^*(t))$ of system \eqref{main1.1}-\eqref{main1.3}  exist and it  is stable if and only if $\frac{\tau_2-\tau_2^{*}}{\tau_2-\tau_2^{**}}<\frac{b_1c_2}{rc_1}$.
	\end{theorem}		
	\begin{proof}
		$(\mathbf{i})$ Assume that $\tau_1 <  \tau^{*}_1$ and  $\tau_2 > \tau_2^*$. Then from Remark \ref{rem.(0,v)}, we know that system \eqref{main1.1}-\eqref{main1.3} has a unique semi-trivial  $T$-periodic solution $(u^*(t), 0)$.
		
		Denote by $f_1$, $f_2$ and $f_3$ the right-hand side vector fields of system \eqref{main1.1}-\eqref{main1.3}, respectively. Then
		\begin{equation*}
			Df_1(\omega) = \begin{bmatrix} -d_1 & 0 \\ 0 & -d_2 \end{bmatrix},
		\end{equation*}
		\begin{equation*}
			Df_2(\omega) = \begin{bmatrix} 1 - 2x - b_1y  & -b_1x \\ -rd_2y & r(1-2y-b_2x) \end{bmatrix}
		\end{equation*}
		and
		\begin{equation*}
			Df_3(\omega) = \begin{bmatrix} 1 - 2x - b_1y - c_1  & -b_1x \\ -rd_2y & r(1-2y-b_2x) - c_2  \end{bmatrix}
		\end{equation*}
		for $\omega = (x,y) \neq (0,0)$.
		
		Write $U_1(t, \omega):=Q_{1t}(\omega)$, $ U_2(t, \omega):=Q_{2t}(\omega)$ and $ U_3(t, \omega):=Q_{3t}(\omega)$. Following the idea in \cite{Hsu.Zhao}, let $V_1(t, \omega) = D_\omega U_1(t, \omega)$, $V_2(t, \omega) = D_\omega U_2(t, \omega)$ and $V_3(t, \omega) = D_\omega U_3(t, \omega)$. Then the matrix functions $ V_1 := V_1(t, \omega)$, $V_2 := V_2(t,\omega)$ and $V_3 := V_3(t, \omega)$ satisfy
		\begin{equation*}
			\frac{dV_1(t)}{dt} = Df_1(U_1(t, \omega))V_1(t), \quad V_1(0) = I,
		\end{equation*}
		\begin{equation*}
			\frac{dV_2(t)}{dt} = Df_2(U_2(t,\omega))V_2(t), \quad V_2(0) = I,
		\end{equation*}
		\begin{equation*}
			\frac{dV_3(t)}{dt} = Df_3(U_3(t,\omega))V_3(t), \quad V_3(0) = I.
		\end{equation*}			
		\noindent  It follows from the chain rule that
		\begin{equation*}
			DP(\omega_1) = DQ_{3(T-\tau_2)}(DQ_{2(\tau_2-\tau_1)}(Q_{1\tau_1}(\omega_1))) \cdot DQ_{2(\tau_2-\tau_1)}(Q_{1\tau_1}(\omega_1))\cdot DQ_{1\tau_1}(\omega_1).
		\end{equation*}
		
		\noindent  Then
		\begin{equation*}
			\begin{split}
				DQ_{1\tau_1}(\omega_1) &= V_1(\tau_1, \omega_1), \\
				DQ_{2(\tau_2-\tau_1)}(Q_{1\tau_1(\omega_1)}) &= V_2(\tau_2-\tau_1, Q_{1\tau_1}(\omega_1)), \\
				DQ_{3(T-\tau_2)}(DQ_{2(\tau_2-\tau_1)}(Q_{1\tau_1}(\omega_1))) &= V_3(T-\tau_2, Q_{2(\tau_2-\tau_1)}, Q_{1\tau_1}(\omega_1))).
			\end{split}
		\end{equation*}

		Note that  $U_1(t, \omega_1) = (u_1^*(t), 0)$ for $t \in (0, \tau_1]$.
		Thus
		\begin{equation*}
			Df_1(U_1(t, \omega_1)) = \begin{bmatrix} -d_1 & 0 \\ 0 & -d_2 \end{bmatrix},
		\end{equation*}			
		and consequently,
		\begin{equation*}
			V_1(\tau_1,\omega_1) = \begin{bmatrix} e^{-d_1 \tau_1} & 0 \\ 0 & e^{-d_2\tau_1} \end{bmatrix}.
		\end{equation*}
		
		Similarly, it follows from $U_2\left(t, Q_{1\tau_1}(\omega_1)\right) = \left(u_{2}^{*}(t), 0\right)$ for $t \in (\tau_1, \tau_2] $ that
		\begin{equation*}
			Df_2(U_2(t,  Q_{1\tau_1}(\omega_1))) = \begin{bmatrix} 1 - 2 u_2^*(t) & -b_1u_2^*(t) \\ 0 & r(1-b_2u_2^*(t)) \end{bmatrix},
		\end{equation*}
		and hence
		\begin{equation*}
			V_2(\tau_2-\tau_1,Q_{1\tau_1}(\omega_1)) = \begin{bmatrix} e^{\int_0^{\tau_2-\tau_1}{(1 - 2 u_2^*(t))}} dt & * \\ 0 & e^{\int_0^{\tau_2-\tau_1}r(1-b_2u_2^*(t))dt}  \end{bmatrix}.
		\end{equation*}
		
		It follows from $U_3(t,Q_{2(\tau_2-\tau_1)}(Q_{1\tau_1}(\omega_1))) = (u_{3}^{*}(t), 0)$ for $t \in (\tau_2 , T] $ that
		\begin{equation*}
			Df_3(U_3(t, Q_{2(\tau_2-\tau_1)}(Q_{1\tau_1}(\omega_1)))) = \begin{bmatrix} 1 - 2 u_3^*(t) - c_1 & -b_1u_3^*(t) \\ 0 & r(1-b_2u_3^*(t)) - c_2 \end{bmatrix},
		\end{equation*}
		and hence
		\begin{equation*}
			V_3(T-\tau_2, Q_{2(\tau_2-\tau_1)}, Q_{1\tau_1}(\omega_1))) = \begin{bmatrix} e^{\int_0^{T-\tau_3}{(1 - 2 u_2^*(t)-c_1)}} dt & * \\ 0 & e^{\int_0^{T-\tau_2}r(1-b_2u_3^*(t))-c_2dt} \end{bmatrix}.
		\end{equation*}
		
		Therefore matrix $DP(\omega_1)$ has two positive eigenvalues
		\begin{equation*}
			\lambda_1 = e^{\int_0^{T-\tau_2}{(1 - 2 u_3^*(t)-c_1)}dt} \cdot e^{\int_0^{\tau_2-\tau_1}{(1 - 2 u_2^*(t))}dt} \cdot e^{-d_1\tau_1},
		\end{equation*}
		\begin{equation*}
			\lambda_2 = e^{\int_0^{T-\tau_2}r(1-b_2u_3^*(t))-c_2dt} \cdot e^{\int_0^{\tau_2-\tau_1}r(1-b_2u_2^*(t))dt} \cdot e^{-d_2\tau_1}.
		\end{equation*}			
		It follows from the first equation in \eqref{main1.4} that
		\begin{equation*}
			\frac{du^{*}_{1}(t)}{dt}\frac{1}{u^{*}_{1}(t)} = -d_1.
		\end{equation*}			
		Integrating the above equation in $t$ from $0$ to $\tau_1$, we  obtain
		\begin{equation}\label{eq4.4}
			\ln u^{*}_{1}(\tau_1) - \ln x_{0} = \int_{0}^{\tau_1} -d_1 dt.
		\end{equation}
		
		Similarly, from the second and third  equation in \eqref{main1.4}, we have
		
		\begin{equation}\label{eq4.5}
			\ln u_2^*(\tau_2-\tau_1) - \ln u_1^*(\tau_1) = \int_0^{\tau_2 - \tau_1} (1 - u_2^*(t))  dt.
		\end{equation}
		
		and
		\begin{equation}\label{eq4.6}
			\ln x_0 - \ln u_2^*(\tau_2-\tau_1) = \int_0^{T - \tau_2} (1 - u_3^*(t) - c_1)  dt.
		\end{equation}
		
		By \eqref{eq4.4}-\eqref{eq4.6} and a straightforward computation, we have
		\begin{equation}\label{eq4.7}
			(1-c_1)T+c_1\tau_2-(1+d_1)\tau_1 = \int_0^{\tau_2-\tau_1}u_2^*(t) dt +  \int_0^{T - \tau_2}u_3^*(t) dt.
		\end{equation}
		Hence
		\begin{eqnarray*}
			\lambda_1&=&e^{\int_0^{T-\tau_2}{(1 - 2 u_3^*(t)-c_1)}dt} \cdot e^{\int_0^{\tau_2-\tau_1}{(1 - 2 u_2^*(t))}dt} \cdot e^{-d_1\tau_1} \\
			&=&e^{(c_1-1)T-c_1\tau_2+(1+d_1)\tau_1} \\
			&=&e^{-c_1(\tau_2-\tau_2^*)} \\
			&<&1,
		\end{eqnarray*}
		\begin{eqnarray*}
			\lambda_2&=&e^{\int_0^{T-\tau_2}[r(1-b_2u_3^*(t))-c_2]dt} \cdot e^{\int_0^{\tau_2-\tau_1}r(1-b_2u_2^*(t))dt} \cdot e^{-d_2\tau_1} \\
			&=&e^{[(r-c_2)T+c_2\tau_2-(r+d_2)\tau_1)]-rb_2[(1-c_1)T+c_1\tau_2-(1+d_1)\tau_1]}  \\
			&=&e^{c_{2}(\tau_2-\tau_2^{**})-rb_{2}c_{1}(\tau_2-\tau_2^{*})}.
		\end{eqnarray*}
		We have	$\lambda_2<1$ if and only if $\frac{\tau_2-\tau_2^{**}}{\tau_2-\tau_2^*}<\frac{rb_2c_1}{c_2}$, which implies that $(u^*(t), 0)$ is stable if and only if $\frac{\tau_2-\tau_2^{**}}{\tau_2-\tau_2^*}<\frac{rb_2c_1}{c_2}$.
		
		$(\mathbf{ii})$ From Remark \ref{rem.(0,v)}, we see that system \eqref{main1.1}-\eqref{main1.3} has a unique semi-trivial $T$-periodic solution $(0,v^*(t))$. By similar analysis as in the proof of part ($\mathbf{i}$), we have matrix $DP(\omega_2)$ has two positive eigenvalues
		\begin{eqnarray*}
			\lambda_3&=&e^{\int_0^{T-\tau_2}(r(1-2v_3^*(t))-c_2)dt} \cdot e^{\int_0^{\tau_2-\tau_1}r(1-2u_2^*(t))dt} \cdot e^{-d_2\tau_1} \\
			&=&e^{(c_2-r)T-c_2\tau_2+(r+d_2)\tau_1} \\
			&=&e^{-c_2(\tau_2-\tau_2^{**})} \\
			&<&1.
		\end{eqnarray*}
		
		\begin{eqnarray*}
			\lambda_4&=&e^{\int_0^{T-\tau_2}{(1 - b_1 v_3^*(t)-c_1)}dt} \cdot e^{\int_0^{\tau_2-\tau_1}{(1 - b_1 v_2^*(t))}dt} \cdot e^{-d_1\tau_1} \\
			&=&e^{[(1-c_1)T+c_1\tau_2-(1+d_1)\tau_1]-\frac{b_1}{r}[(r-c_2)T+c_2\tau_2-(r+d_2)\tau_1)]} \\
			&=&e^{c_{1}(\tau_2-\tau_2^{*})-\frac{b_{1}c_{2}}{r}(\tau_2-\tau_2^{**})}.
		\end{eqnarray*}
		We know that 	$\lambda_4<1$ if and only if $\frac{\tau_2-\tau_2^{*}}{\tau_2-\tau_2^{**}}<\frac{b_1c_2}{rc_1}$, which indicates that	$(0,v^*(t))$ is stable if and only if $\frac{\tau_2-\tau_2^{*}}{\tau_2-\tau_2^{**}}<\frac{b_1c_2}{rc_1}$. The proof  is complete.
		\end{proof}

		\begin{theorem}\label{theorem4.2}
		Assume that $\tau_1 < \tau^{*}_1$, $\tau_2^*<\tau_2 <\tau_2^{**}$. Then the unique semi-trivial $T$-periodic solution $(u^*(t),0)$ of system \eqref{main1.1}-\eqref{main1.3} is globally asymptotically stable in $Int (\mathbb{R}_{+}^{2})$.
		\end{theorem}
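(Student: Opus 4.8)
The plan is to prove global attraction of every interior solution to $(u^*(t),0)$ by decoupling the two components through seasonwise differential inequalities and then invoking the scalar results already in hand. First I would record that $\tau_1<\tau_1^*$ and $\tau_2>\tau_2^*$ place us in the regime of Theorem \ref{lem.u.solu}, so the positive $T$-periodic orbit $u^*(t)$ of the scalar system \eqref{main1.4} exists and is globally asymptotically stable; combined with $\tau_2<\tau_2^{**}$ this makes the criterion of Lemma \ref{lemma4.1}$(\mathbf{i})$ automatic, since $\frac{\tau_2-\tau_2^{**}}{\tau_2-\tau_2^*}<0<\frac{rb_2c_1}{c_2}$, so $(u^*(t),0)$ is at least locally asymptotically stable. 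The substance of the theorem is upgrading this to global attraction in $Int(\mathbb{R}_{+}^{2})$.

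Second, I would establish extinction of $v$ by comparison. Reading off the $v$-equation of \eqref{main1.1}-\eqref{main1.3} season by season, nonnegativity of $u$ gives $\frac{dv}{dt}=-d_2v$ on the dry season, $\frac{dv}{dt}\le rv(1-v)$ on the growth season, and $\frac{dv}{dt}\le rv(1-v)-c_2v$ on the grazing season; hence the $v$-component is dominated by the solution $\tilde v(t)$ of the scalar system \eqref{main1.5} issued from the same initial datum. Since $\tau_2<\tau_2^{**}$, the scalar $v$-dynamics lies in the extinction regime for every admissible $\tau_1$: if $\tau_1\ge\tau_1^{**}$ this is Lemma \ref{lem.v.tao1}, while if $\tau_1<\tau_1^{**}$ it is Lemma \ref{lem.v0}. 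In both cases $\tilde v(t)\to0$, so $v(t)\to0$ as $t\to\infty$.

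Third, I would pin $u(t)$ to $u^*(t)$ by a squeeze. For the upper bound, dropping the nonpositive term $-b_1uv$ gives $u(t)\le\bar u(t)$, where $\bar u$ solves \eqref{main1.4} with $\bar u(0)=u(0)$; since $u^*$ is globally attracting for \eqref{main1.4}, $\limsup_{t\to\infty}(u(t)-u^*(t))\le0$. For the lower bound, fix a small $\epsilon>0$; by the previous step there is $t_\epsilon$ with $v(t)<\epsilon$ for $t>t_\epsilon$, so on $[t_\epsilon,\infty)$ the function $u$ dominates the solution $w_\epsilon$ of the scalar system obtained from \eqref{main1.4} by replacing the growth terms $u(1-u)$ and $u(1-u)-c_1u$ by $u(1-b_1\epsilon-u)$ and $u(1-b_1\epsilon-u)-c_1u$. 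The linearization of the associated period map at the origin equals $e^{(1-b_1\epsilon-c_1)(T-\tau_2)+(1-b_1\epsilon)(\tau_2-\tau_1)-d_1\tau_1}$, which tends to $e^{c_1(\tau_2-\tau_2^*)}>1$ as $\epsilon\to0$; hence for $\epsilon$ small the origin stays repelling and the perturbed scalar system possesses a unique globally attracting positive periodic orbit $w_\epsilon^*(t)$ by the same argument as in Theorem \ref{lem.u.solu}. Comparison then yields $\liminf_{t\to\infty}(u(t)-w_\epsilon^*(t))\ge0$, and letting $\epsilon\to0$ with $w_\epsilon^*\to u^*$ gives $\liminf_{t\to\infty}(u(t)-u^*(t))\ge0$. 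The two bounds force $u(t)-u^*(t)\to0$, which together with $v(t)\to0$ completes the proof.

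The main obstacle is precisely the lower bound in the third step: I must guarantee that $u$ is not driven to extinction by the transient presence of $v$, which is a uniform-persistence statement, and I must verify that the perturbed orbit $w_\epsilon^*$ depends continuously on $\epsilon$ with $w_\epsilon^*\to u^*$ as $\epsilon\to0$. Both rely on the strictness of $\tau_2>\tau_2^*$, which keeps the origin repelling for the scalar $u$-map under small perturbations. An alternative route that avoids the explicit perturbation estimates is to observe that the period map $P=Q_{3(T-\tau_2)}\circ Q_{2(\tau_2-\tau_1)}\circ Q_{1\tau_1}$ is competitive, since each seasonal flow in \eqref{eq4.1}-\eqref{eq4.3} preserves the competitive order and composition preserves it; its only nonnegative fixed points are $E_0$ and $\omega_1=(x_0,0)$, with no positive fixed point on the $v$-axis because $\tau_2<\tau_2^{**}$, so the theory of planar competitive maps together with the local stability of $\omega_1$ from Lemma \ref{lemma4.1}$(\mathbf{i})$ yields global attraction. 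I would choose whichever of these two routes meshes best with the conventions already fixed in the paper.
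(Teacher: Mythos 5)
Your proposal is correct in substance but follows a genuinely different route from the paper. The paper argues through periodic-solution bookkeeping plus monotone-systems theory: it checks local stability of $(u^*(t),0)$ exactly as you do via Lemma \ref{lemma4.1}, shows $E_0$ is unstable by computing the eigenvalues $e^{c_1(\tau_2-\tau_2^*)}>1$ and $e^{c_2(\tau_2-\tau_2^{**})}<1$ of $DP((0,0))$, rules out the semi-trivial solution $(0,v^*(t))$ using $\tau_2<\tau_2^{**}$, and excludes interior $T$-periodic solutions by integrating the $v$-equations of a putative positive periodic orbit over one period, which forces the manifestly positive quantity $\int_0^{\tau_2-\tau_1} r(\bar{v}_2(t)+b_2\bar{u}_2(t))\,dt+\int_0^{T-\tau_2} r(\bar{v}_3(t)+b_2\bar{u}_3(t))\,dt$ to equal $c_2(\tau_2-\tau_2^{**})<0$; it then invokes Lemma 2.2(iii) of Hsu--Zhao for planar competitive period maps to upgrade to global attraction. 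Your primary route instead decouples the system: $v\to 0$ by scalar comparison (Lemmas \ref{lem.v.tao1} and \ref{lem.v0}), then a squeeze on $u$ via the $\epsilon$-perturbed scalar system. This avoids the monotone-maps classification entirely and is more elementary and self-contained, at the cost of exactly the technical work you flag: one must re-run Theorem \ref{lem.u.solu} for the perturbed system (growth rate $1-b_1\epsilon$, with the critical harvesting case shifted to $c_1=1-b_1\epsilon$) and verify $w^*_\epsilon\to u^*$; this is completable, e.g., by noting the perturbed period maps $H_\epsilon$ are pointwise decreasing in $\epsilon$, so their unique positive fixed points $x_\epsilon$ are monotone in $\epsilon$ and bounded away from $0$, whence any limit point as $\epsilon\to 0$ is a positive fixed point of $H$ and equals $x_0$ by uniqueness. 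Your comparison step also buys something the paper must argue separately: extinction of $v$ along arbitrary interior solutions immediately rules out positive periodic orbits, making the paper's integral identity redundant on your route. Conversely, your sketched alternative is essentially the paper's proof, but as stated it has a small gap: you justify only that there is no fixed point on the $v$-axis, whereas the competitive-maps trichotomy also requires excluding \emph{interior} fixed points of the period map --- which is precisely what the paper's integral computation (or your own $v$-extinction argument) supplies.
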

		\begin{proof}
		From Theorem \ref{lemma4.1}, we see  that the $T$-periodic solution $\left(u^{*}(t), 0\right)$ originating from $\left(x_{0}, 0\right)$ is locally stable.
		It remains to prove that every solution of system \eqref{main1.1}-\eqref{main1.3} goes to $\left(u^{*}(t), 0\right)$.
		
		Note that $\tau_2^*<\tau_2 <\tau_2^{**}$. Similar to the proof of Theorem \ref{lemma4.1}, we can show that matrix $ DP((0,0))$ has two positive eigenvalues
		\begin{eqnarray*}
			\lambda_5 = e^{(1-c_1)(T-\tau_2)+(\tau_2-\tau_1)-d_1\tau_1}=e^{c_1(\tau_2-\tau_2^*)}>1,
		\end{eqnarray*}
		\begin{eqnarray*}
			\lambda_6 = e^{(r-c_2)(T-\tau_2)+r(\tau_2-\tau_1)-d_2\tau_1} = e^{c_2(\tau_2-\tau_2^{**})}<1.
		\end{eqnarray*}
		Hence, the zero solution $E_{0}$ of system \eqref{main1.1}-\eqref{main1.3} is unstable. It follows from $\tau_2 <\tau_2^{**}$ and Theorem \ref{lem.v.solu} that system \eqref{main1.1}-\eqref{main1.3} has no semi-trivial $T $-periodic solution $\left(0, v^*(t)\right)$.
		
		We are ready to show that  system \eqref{main1.1}-\eqref{main1.3} has no positive $T$-periodic solution.
		Assume that system \eqref{main1.1}-\eqref{main1.3} has a positive $T$-periodic solution $(\bar{u}(t),\bar{v}(t))$ originating from $(\bar{x},\bar{y})$, with
		\begin{equation}\label{ubar1}
			\bar{u}(t) =
			\begin{cases}
				\bar{u}_{1}(t), \, & t \in (0, \tau_1], \\
				\bar{u}_{2}(t-\tau_1), & t \in (\tau_1, \tau_2], \\
				\bar{u}_{3}(t-\tau_2), & t \in (\tau_2, T]
			\end{cases}
		\end{equation}
		and			
		\begin{equation}\label{ubar2}
			\bar{v}(t) =
			\begin{cases}
				\bar{v}_{1}(t), & t \in (0, \tau_1], \\
				\bar{v}_{2}(t-\tau_1), & t \in (\tau_1, \tau_2], \\
				\bar{v}_{3}(t-\tau_2), & t \in (\tau_2, T].
			\end{cases}
		\end{equation}
		It follows from equation \eqref{main1.1}, \eqref{main1.2} and \eqref{main1.3} that
		\begin{equation}\label{p1}
			\begin{cases}
				\ln\bar{u}_{1}(\tau_1)-\ln\bar{x}=\int_{0}^{\tau_1}-d_{1} d t,\\
				\ln\bar{v}_{1}(\tau_1)-\ln\bar{y}=\int_{0}^{\tau_1}-d_{2} d t,
			\end{cases}
		\end{equation}
		\begin{equation}\label{p2}
			\begin{cases}
				\ln\bar{u}_{2}(\tau_2-\tau_1)-\ln\bar{u}_{1}(\tau_1)=\int_{0}^{\tau_2-\tau_1}(1-\bar{u}_{2}(t)-b_{1}\bar{v}_{2}(t)) d t,\\
				\ln\bar{v}_{2}(\tau_2-\tau_1)-\ln\bar{v}_{1}(\tau_1)=\int_{0}^{\tau_2-\tau_1}r(1-\bar{v}_{2}(t)-b_{2}\bar{u}_{2}(t)) d t,
			\end{cases}
		\end{equation}		
		and
		\begin{equation}\label{p3}
			\begin{cases}
				\ln\bar{x}-\ln\bar{u}_{2}(\tau_2-\tau_1)=\int_{0}^{T-\tau_2}(1-\bar{u}_{3}(t)-b_{1}\bar{v}_{3}(t)-c_1) dt,\\
				\ln\bar{y}-\ln\bar{v}_{2}(\tau_2-\tau_1)=\int_{0}^{T-\tau_2}(r-r\bar{v}_{3}(t)-rb_{2}\bar{u}_{3}(t)-c_2) dt.
			\end{cases}
		\end{equation}			
		Adding the second equation of \eqref{p1}, \eqref{p2} and \eqref{p3}, we obtain
		\begin{equation*}
			\begin{aligned}
				&\int_0^{\tau_2-\tau_1}(r\bar{v}_2(t)+rb_2\bar{u}_2(t))dt+\int_0^{T-\tau_2}(r\bar{v}_3(t)+rb_2\bar{u}_3(t))dt \\
				&=(r-c_2)(T-\tau_2)+r(\tau_2-\tau_1)-d_2\tau_1 \\
				&=c_2(\tau_2-\tau_2^{**})<0,
			\end{aligned}
		\end{equation*}
		which is a contradiction. Hence, system \eqref{main1.1}-\eqref{main1.3} has no positive $T$-periodic solution.
		From lemma 2.2($\mathbf{iii}$) of \cite{Hsu.Zhao}, we know that every solution of \eqref{main1.1}-\eqref{main1.3} goes to $(u^*(t),0)$ in $Int (\mathbb{R}_{+}^{2})$. The proof is complete.
		\end{proof}
		\begin{remark}From a biological perspective, Theorem \ref{theorem4.2} indicates that in semi-arid ecosystems, when the grazing period for vegetation $u$ is relatively short and that for vegetation $v$ is long, vegetation $u$ can better adapt to the environmental conditions, obtain more resources for growth and reproduction, and thus outcompete vegetation $v$.\end{remark}

		Similar to the proof of Theorem \ref{theorem4.2}, we can obtain the following result for semi-trivial $T$-periodic solution $(0,v^*(t))$ of system \eqref{main1.1}-\eqref{main1.3}.
		\begin{theorem}\label{theorem4.3}
		Assume that $\tau_1 < \tau^{**}_1$, $\tau_2^{**}<\tau_2 < \tau_2^{*}$.  Then the unique semi-trivial $T$-periodic solution $(0,v^*(t))$ of system \eqref{main1.1}-\eqref{main1.3} is globally asymptotically stable in $Int (\mathbb{R}_{+}^{2})$.
		\end{theorem}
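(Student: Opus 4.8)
The plan is to mirror the proof of Theorem~\ref{theorem4.2} with the roles of $u$ and $v$ interchanged and $\tau_2^{*}$ swapped with $\tau_2^{**}$, closing the argument with the same competitive monotone-dynamics result of Hsu and Zhao. First I would invoke Lemma~\ref{lemma4.1}$(\mathbf{ii})$: since $\tau_1<\tau^{**}_1$ and $\tau_2>\tau_2^{**}$, Remark~\ref{rem.(0,v)} guarantees that the semi-trivial solution $(0,v^*(t))$ exists, and because the regime $\tau_2^{**}<\tau_2<\tau_2^{*}$ forces $\tau_2-\tau_2^{*}<0$ while $\tau_2-\tau_2^{**}>0$, the ratio $\frac{\tau_2-\tau_2^{*}}{\tau_2-\tau_2^{**}}$ is negative and hence automatically below the positive number $\frac{b_1c_2}{rc_1}$. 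Thus the stability criterion of Lemma~\ref{lemma4.1}$(\mathbf{ii})$ holds and $(0,v^*(t))$ is locally asymptotically stable; it then remains to upgrade this to global stability in $Int(\mathbb{R}_+^2)$ by ruling out every competing $\omega$-limit set.

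Next I would assemble the three ingredients that feed the monotone theory. First, computing the eigenvalues of $DP((0,0))$ exactly as in Theorem~\ref{theorem4.2} yields $\lambda_5=e^{c_1(\tau_2-\tau_2^{*})}<1$ (because $\tau_2<\tau_2^{*}$) and $\lambda_6=e^{c_2(\tau_2-\tau_2^{**})}>1$ (because $\tau_2>\tau_2^{**}$), so $E_0$ is unstable. Second, since $\tau_2<\tau_2^{*}$ the existence condition of Theorem~\ref{lem.u.solu} fails, and Lemma~\ref{lem.u.0} (or Lemma~\ref{lem.u.tao1} in the case $\tau_1\ge\tau^{*}_1$) shows that the decoupled $u$-subsystem \eqref{main1.4} admits no positive $T$-periodic solution; consequently system \eqref{main1.1}-\eqref{main1.3} carries no semi-trivial solution of the form $(u^*(t),0)$. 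Third, the only remaining obstruction is a positive $T$-periodic solution, which I rule out by the integral identity below.

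For the last point, suppose $(\bar u(t),\bar v(t))$ were a positive $T$-periodic solution in the three-phase form \eqref{ubar1}-\eqref{ubar2}. Taking the logarithmic derivative of the \emph{$u$-equations} over the three intervals and adding the resulting identities (the left-hand side telescopes to zero by periodicity) gives
\begin{equation*}
\int_0^{\tau_2-\tau_1}\!\!\big(\bar u_2(t)+b_1\bar v_2(t)\big)\,dt+\int_0^{T-\tau_2}\!\!\big(\bar u_3(t)+b_1\bar v_3(t)\big)\,dt=(1-c_1)(T-\tau_2)+(\tau_2-\tau_1)-d_1\tau_1=c_1(\tau_2-\tau_2^{*}).
\end{equation*}
The left-hand side is strictly positive, while $\tau_2<\tau_2^{*}$ makes the right-hand side negative, a contradiction; hence no positive $T$-periodic solution exists. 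With $E_0$ unstable, $(u^*(t),0)$ nonexistent, $(0,v^*(t))$ the unique stable semi-trivial solution, and no interior fixed point of the period map $P$, Lemma~2.2 of \cite{Hsu.Zhao} forces every orbit in $Int(\mathbb{R}_+^2)$ to converge to $(0,v^*(t))$.

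The main difficulty here is bookkeeping rather than conceptual: one must interchange the two components consistently throughout, and in particular sum the \emph{first} (the $u$-) equations in the last step so that the right-hand side collapses to $c_1(\tau_2-\tau_2^{*})$ and inherits the correct sign from $\tau_2<\tau_2^{*}$ — exactly the opposite choice to the one made in Theorem~\ref{theorem4.2}. I would also verify that the hypothesis $\tau_1<\tau^{**}_1$ (rather than $\tau_1<\tau^{*}_1$) is the one actually required, since it is what secures existence of $(0,v^*(t))$, and confirm that the nonexistence of $(u^*(t),0)$ goes through for every admissible value of $\tau_1$ by splitting into the cases $\tau_1<\tau^{*}_1$ and $\tau_1\ge\tau^{*}_1$.
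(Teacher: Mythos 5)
Your proposal is correct and follows essentially the same route as the paper's own proof: local stability of $(0,v^*(t))$ and instability of $E_0$ via Lemma~\ref{lemma4.1}, nonexistence of $(u^*(t),0)$ from $\tau_2<\tau_2^{*}$, ruling out interior $T$-periodic solutions by summing the logarithmic $u$-equations to get the sign contradiction $c_1(\tau_2-\tau_2^{*})<0$, and closing with Lemma~2.2$(\mathbf{iii})$ of \cite{Hsu.Zhao}. In fact your case split $\tau_1<\tau^{*}_1$ versus $\tau_1\ge\tau^{*}_1$ (using Lemma~\ref{lem.u.tao1} in the latter case) is slightly more careful than the paper, which cites only Lemma~\ref{lem.u.0} even though the hypothesis $\tau_1<\tau^{**}_1$ does not by itself guarantee $\tau_1<\tau^{*}_1$.
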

		\begin{proof}
		Since	$\tau_1 < \tau^{**}_1$, $\tau_2^{**}<\tau_2 < \tau_2^{*}$,	from Theorem \ref{lemma4.1}, we see that the $T$-periodic solution $(0,v^*(t))$ is locally stable and the trivial steady state $E_{0}$ of system \eqref{main1.1}-\eqref{main1.3} is unstable. Sine $\tau_2 < \tau_2^*$, by Theorem \ref{lem.u.0}, we know that system \eqref{main1.1}-\eqref{main1.3} has no semi-trivial $T$-periodic solution $(u^*(t),0)$.
		
		We next prove system \eqref{main1.1}-\eqref{main1.3} has no positive $T$-periodic solution by comparison theorem. Similar to the proof of Theorem \ref{theorem4.2}, we assume that system \eqref{main1.1}-\eqref{main1.3} has a positive $T$-periodic solution $(\bar{u}(t),\bar{v}(t))$ originating from $(\bar{x},\bar{y})$. We add the first equations of \eqref{p1}, \eqref{p2} and \eqref{p3}, and obtain
		\begin{equation*}
			\begin{aligned}
				&\int_0^{\tau_2-\tau_1}(\bar{u}_2(t)+b_1\bar{v}_2(t))dt+\int_0^{T-\tau_2}(\bar{u}_3(t)+b_1\bar{v}_3(t))dt \\
				&=-d_1\tau_1+\tau_2-\tau_1+(1-c_1)(T-\tau_2) \\
				&=c_1(\tau_2-\tau_2^{*})<0,
			\end{aligned}
		\end{equation*}
		which is a contradiction. Hence, system \eqref{main1.1}-\eqref{main1.3} has no positive $T$-periodic solution.
		From Lemma 2.2($\mathbf{iii}$) of \cite{Hsu.Zhao}, the unique semi-trivial $T$-periodic solution $(0,v^*(t))$ of system \eqref{main1.1}-\eqref{main1.3} is globally attractive. The proof is complete.
		\end{proof}

		\subsection{System collapse: extinction of both vegetation species}	
		We now state the result about the stability of the trivial steady state $E_0$.
		
		\begin{theorem}\label{theorem4.1}
		Assume that one of the following conditions is satisfied:\\
		$(\mathbf{i})$ $\tau_1 \geq \text{max} \{\tau^{*}_1,\tau^{**}_1\}$.\\
		$(\mathbf{ii})$   $\tau^{*}_1\leq\tau_1 < \tau^{**}_1$, $\tau_2\leq\tau_2^{**}$.\\
		$(\mathbf{iii})$ $\tau^{**}_1\leq\tau_1 < \tau^{*}_1$ , $\tau_2\leq\tau_2^*$.\\
		$(\mathbf{iv})$ $\tau_1 < \text{min} \{\tau^{*}_1,\tau^{**}_1\}$, $\tau_2\leq \text{min} \{\tau_2^*,\tau_2^{**}\}$.\\
		Then the trivial steady state $E_0$ of system \eqref{main1.1}-\eqref{main1.3} is globally asymptotically stable  in $Int (\mathbb{R}^{2}_{+})$.
		\begin{proof}
		When conditions (i)	and (iv) are satisfied, one can easily show that the trivial steady state $E_0$ of system \eqref{main1.1}-\eqref{main1.3} is globally asymptotically stable. We only prove case (ii). Denote by $W(t; 0, \omega)=(u(t),v(t))$ the solution of system \eqref{main1.1}-\eqref{main1.3} with the initial value $ (u(0),v(0)) = \omega = (x, y) \in \mathbb{R}^2_+ $. Then $u(t)\geq 0$ and $v(t)\geq 0$ for $t\geq 0$.
			
			Let $\hat{u}(t)$ be the solution of system \eqref{main1.4} with initial value $\hat{u}(0) = x$. Then
			\begin{equation*}
				u(t) \leq \hat{u}(t), \quad t \geq 0.
			\end{equation*}
			Note that one of the conditions $(\mathbf{i})$-$(\mathbf{iv})$ is satisfied. It follows from Theorem \ref{lem.u.tao1} and Theorem \ref{lem.u.0} that the zero solution of system \eqref{main1.4} is globally asymptotically stable. Therefore
			\begin{equation*}
				\lim_{t \to \infty} u(t) = 0.
			\end{equation*}			
			This, together with the theory of asymptotically autonomous systems, implies that the long time behavior of \eqref{main1.1}-\eqref{main1.3} is determined by its limiting system, i.e., system \eqref{main1.5}. If  $\tau_1 < \tau_1^{**}$, $\tau_2\leq\tau_2^{**}$ holds, then by Theorems \ref{lem.v.tao2} and \ref{lem.v0}, the trivial steady state $E_0$ of system \eqref{main1.5} is globally asymptotically stable. The proof is completed.
		\end{proof}
		\end{theorem}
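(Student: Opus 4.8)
The plan is to decouple the two–dimensional limit $(u(t),v(t))\to(0,0)$ into the two scalar statements $u(t)\to0$ and $v(t)\to0$, each obtained by comparing the corresponding component of the competition system with one of the subsystems \eqref{main1.4}, \eqref{main1.5} whose extinction behaviour was already settled in Section~2. I would first dispose of $u$. Because $u(1-u-b_1v)\le u(1-u)$ and $u(1-u-b_1v)-c_1u\le u(1-u)-c_1u$ whenever $u,v\ge0$, while the dry–phase equations coincide exactly, the comparison principle applied phase by phase across the switching instants $nT,\,nT+\tau_1,\,nT+\tau_2$ gives $0\le u(t)\le\hat u(t)$, where $\hat u$ solves \eqref{main1.4} with $\hat u(0)=x$. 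I would then read each hypothesis against the two extinction lemmas for $u$: conditions $(\mathbf i)$ and $(\mathbf ii)$ both force $\tau_1\ge\tau_1^*$, so Lemma \ref{lem.u.tao1} gives $\hat u(t)\to0$; conditions $(\mathbf iii)$ and $(\mathbf iv)$ give $\tau_1<\tau_1^*$ together with $\tau_2\le\tau_2^*$, so Lemma \ref{lem.u.0} gives $\hat u(t)\to0$. In every regime $\hat u(t)\to0$, hence $u(t)\to0$.

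For $v$ I would run the symmetric comparison: since $-b_2u\le0$ only lowers the per-capita growth rate, $0\le v(t)\le\tilde v(t)$ with $\tilde v$ solving \eqref{main1.5} and $\tilde v(0)=y$. Matching hypotheses to the $v$-lemmas, conditions $(\mathbf i)$ and $(\mathbf iii)$ yield $\tau_1\ge\tau_1^{**}$ (apply Lemma \ref{lem.v.tao1}), while $(\mathbf ii)$ and $(\mathbf iv)$ yield $\tau_1<\tau_1^{**}$ with $\tau_2\le\tau_2^{**}$ (apply Lemma \ref{lem.v0}); in all cases $\tilde v(t)\to0$, so $v(t)\to0$. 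An equivalent route, which the lemmas equally support, is to note that once $u(t)\to0$ the full system is asymptotically periodic with limiting system \eqref{main1.5}, whose zero solution is globally attracting under exactly these conditions; the theory of asymptotically autonomous (more precisely, asymptotically periodic) semiflows then transfers this to $v(t)\to0$. Either way, combining the two limits gives $(u(t),v(t))\to(0,0)$, i.e. global asymptotic stability of $E_0$ in $Int(\mathbb{R}_{+}^{2})$.

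The only delicate point is the bookkeeping that simultaneously assigns each of the four parameter regimes to the correct extinction lemma for \emph{both} subsystems, so that no case slips through the partition determined by $\tau_1^*,\tau_1^{**},\tau_2^*,\tau_2^{**}$. If one prefers the asymptotically periodic route for $v$ over the direct comparison, the main obstacle shifts to checking the hypotheses of the limiting-system theorem, namely uniform boundedness of orbits and the attractivity structure of the $\omega$-limit set; the twofold comparison argument sketched above sidesteps that machinery entirely and reduces the proof to routine verification.
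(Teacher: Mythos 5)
Your proposal is correct, and it diverges from the paper in one substantive way. For the $u$-component you do exactly what the paper does: phase-by-phase comparison of $u$ with the solution $\hat u$ of \eqref{main1.4}, then the case split sending $(\mathbf{i})$, $(\mathbf{ii})$ to Lemma \ref{lem.u.tao1} (via $\tau_1\geq\tau_1^*$) and $(\mathbf{iii})$, $(\mathbf{iv})$ to Lemma \ref{lem.u.0} (via $\tau_1<\tau_1^*$, $\tau_2\leq\tau_2^*$); your bookkeeping here matches the paper's and is complete. For the $v$-component, however, your primary route is a second direct comparison, $0\leq v(t)\leq\tilde v(t)$ with $\tilde v$ solving \eqref{main1.5}, justified by $-rb_2uv\leq 0$, followed by Lemma \ref{lem.v.tao1} for cases $(\mathbf{i})$, $(\mathbf{iii})$ and Lemma \ref{lem.v0} for cases $(\mathbf{ii})$, $(\mathbf{iv})$ --- whereas the paper instead invokes the theory of asymptotically autonomous systems, arguing that once $u(t)\to 0$ the long-time behavior is governed by the limiting system \eqref{main1.5}. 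Your twofold-comparison argument is more elementary and more self-contained: as you yourself observe, the limiting-system route requires verifying the hypotheses of the relevant limit-theorem machinery, and since the limiting system here is $T$-periodic rather than autonomous, the paper's appeal to ``asymptotically autonomous systems'' is in fact slightly loose and would more properly rest on asymptotically periodic semiflow theory; your comparison sidesteps all of this at no cost in generality. One further small point in your favor: the paper's proof cites Lemma \ref{lem.v.solu} (an existence/stability result for the positive periodic solution) where the extinction Lemma \ref{lem.v0} is evidently meant, and your citation is the correct one.
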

		
		\subsection{Coexistence  of two competition vegetation species}	
		
		\begin{theorem}\label{theorem4.4}
		Assume that the following conditions  are satisfied:\\
		$(\mathbf{i})$  $\tau_1 < \text{min} \{\tau^{*}_1,\tau^{**}_1\}$.\\
		$(\mathbf{ii})$   $\tau_2> \text{max} \{\tau_2^*,\tau_2^{**}\}$.\\
		$(\mathbf{iii})$  $\frac{r b_2 c_1}{c_2} < \frac{\tau_2 - \tau_2^{**}}{\tau_2 - \tau_2^*} < \frac{r c_1}{b_1 c_2}. $\\
		Then system \eqref{main1.1}-\eqref{main1.3} has a unique globally asymptotically stable positive $T$-periodic solution $(\bar{u}(t),\bar{v}(t))$ originate from $(\bar{x},\bar{y})$ in $Int (\mathbb{R}^{2}_{+})$.
		\end{theorem}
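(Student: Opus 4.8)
The plan is to set the argument inside the theory of monotone (competitive) dynamical systems, following Hsu and Zhao \cite{Hsu.Zhao}, and to read the invasibility of each monoculture directly from the eigenvalue formulas already established in Lemma \ref{lemma4.1}. First I would note that conditions $(\mathbf{i})$ and $(\mathbf{ii})$ guarantee, through Remark \ref{rem.(0,v)}, that both semi-trivial $T$-periodic solutions $(u^*(t),0)$ and $(0,v^*(t))$ exist. Condition $(\mathbf{iii})$ is then exactly the statement that both boundary fixed points are \emph{unstable}: the inequality $\frac{rb_2c_1}{c_2}<\frac{\tau_2-\tau_2^{**}}{\tau_2-\tau_2^*}$ forces the transverse multiplier $\lambda_2=e^{c_2(\tau_2-\tau_2^{**})-rb_2c_1(\tau_2-\tau_2^*)}>1$, so $v$ invades the $u$-monoculture $\omega_1=(x_0,0)$, while $\frac{\tau_2-\tau_2^{**}}{\tau_2-\tau_2^*}<\frac{rc_1}{b_1c_2}$ forces $\lambda_4=e^{c_1(\tau_2-\tau_2^*)-\frac{b_1c_2}{r}(\tau_2-\tau_2^{**})}>1$, so $u$ invades $\omega_2=(0,y_0)$. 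Since Lemma \ref{lemma4.1} also gives $\lambda_1,\lambda_3<1$, each semi-trivial solution remains stable \emph{within} its own invariant axis, so both are genuine saddles whose unstable directions point into the interior.

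Next I would record the structural properties of the period map $P=Q_{3(T-\tau_2)}\circ Q_{2(\tau_2-\tau_1)}\circ Q_{1\tau_1}$. Because the vector fields $f_1,f_2,f_3$ have nonpositive off-diagonal Jacobian entries ($-b_1u\le0$ and $-rb_2v\le0$), each semiflow $Q_{it}$ is competitive and preserves the competitive order $\le_K$ with $K=\{(u,v):u\ge0,\ v\le0\}$; their composition $P$ is therefore monotone, and strongly monotone on $\mathrm{Int}(\mathbb{R}_+^2)$ since the off-diagonal entries are strictly negative there. Boundedness of all densities confines the dynamics to a compact, positively invariant, globally attracting box $[0,M]\times[0,M]$, so $P$ is point-dissipative and possesses a global attractor. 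In the $\le_K$ order the two axial fixed points satisfy $\omega_2\le_K\omega_1$, and the order interval $[\omega_2,\omega_1]_K$ is precisely the biologically relevant rectangle $\{0\le u\le x_0,\ 0\le v\le y_0\}$, which $P$ maps into itself.

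The third step is to produce a positive fixed point. The origin $E_0$ is a repeller: as in the proof of Theorem \ref{theorem4.2}, $DP(E_0)$ has eigenvalues $\lambda_5=e^{c_1(\tau_2-\tau_2^*)}>1$ and $\lambda_6=e^{c_2(\tau_2-\tau_2^{**})}>1$ under $(\mathbf{ii})$. With $E_0$ repelling and both $\omega_1,\omega_2$ unstable from the interior, the flow is uniformly persistent; alternatively, applying the Dancer--Hess connecting-orbit/fixed-point result to the monotone map $P$ on the order interval $[\omega_2,\omega_1]_K$ with both endpoints unstable yields a fixed point $(\bar x,\bar y)$ with $\omega_2<_K(\bar x,\bar y)<_K\omega_1$, equivalently a positive $T$-periodic solution $(\bar u(t),\bar v(t))$ of system \eqref{main1.1}-\eqref{main1.3}.

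The genuinely hard step is uniqueness together with global asymptotic stability. I would rule out a second interior fixed point by the planar competitive trichotomy: any two distinct positive fixed points of a strongly competitive map must be strictly $\le_K$-ordered, and with $E_0$ repelling and both axial points unstable there is no stable boundary fixed point to anchor the nested order intervals such an ordered pair would require, which contradicts the classification of \cite{Hsu.Zhao} (their Lemma~2.2); one may also substantiate this by a Jacobian-sign or concavity check on the composite map specific to \eqref{main1.1}-\eqref{main1.3}, in the spirit of the uniqueness argument in Theorem \ref{lem.u.solu}. Once uniqueness is secured, the convergence theorem for point-dissipative strongly monotone planar maps (every bounded interior orbit converges to a fixed point, and the unique interior fixed point must be the limit) upgrades local stability to global asymptotic stability in $\mathrm{Int}(\mathbb{R}_+^2)$. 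I expect this last step to be the principal obstacle: existence of the coexistence state is almost forced by the two boundary instabilities, whereas excluding multiplicity and extending attraction to the entire interior is where the full strength of the competitive monotone-map theory must be invoked.
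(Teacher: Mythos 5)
Your skeleton coincides with the paper's: both read the instability of $\omega_1$, $\omega_2$ from the multipliers $\lambda_2,\lambda_4$ of Lemma \ref{lemma4.1}, note that $E_0$ is repelling under condition $(\mathbf{ii})$, and invoke Lemma 2.2($\mathbf{iii}$) of \cite{Hsu.Zhao} for existence of a positive fixed point and for global convergence once uniqueness is in hand. But the step you yourself flag as the principal obstacle --- uniqueness --- is exactly where your argument has a genuine gap. Your claim that an ordered pair of interior fixed points would need ``a stable boundary fixed point to anchor the nested order intervals'' is not a theorem: the trichotomy for strongly competitive planar maps is entirely consistent with several totally ordered interior fixed points (alternately stable and unstable) while $E_0$ is a repeller and both axial fixed points are linearly unstable; Lemma 2.2($\mathbf{iii}$) of \cite{Hsu.Zhao} only delivers a nonempty, order-bounded set of positive fixed points and convergence of interior orbits toward that set. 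No soft monotone-systems reasoning can close this. The paper's actual mechanism is a Lotka--Volterra-specific integral identity: dividing each equation by the corresponding component and integrating over one period, every positive $T$-periodic solution must satisfy $\int_0^{\tau_2-\tau_1}(\bar u_2+b_1\bar v_2)\,dt+\int_0^{T-\tau_2}(\bar u_3+b_1\bar v_3)\,dt=c_1(\tau_2-\tau_2^{*})$, together with the analogous identity for the $v$-equation with right-hand side $c_2(\tau_2-\tau_2^{**})/r$ after division by $r$. Since condition $(\mathbf{iii})$ forces $b_1b_2<1$, this $2\times 2$ linear system determines the period-averages of $\bar u$ and $\bar v$ uniquely from the parameters alone. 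Two distinct fixed points of the strongly monotone period map would be strictly ordered in the competitive order (this is the one place the order structure is genuinely used, and it is what licenses the paper's ``WLOG $\bar u(t)<\tilde u(t)$''), whence strict inequality of the averages --- contradicting the identity. Your fallback suggestion of ``a Jacobian-sign or concavity check'' is not developed and would not be routine here.

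A secondary but real omission: your closing sentence ``upgrades local stability to global'' presupposes local stability of the coexistence state, which you never establish. The paper proves it by an explicit variational computation at $\bar{\omega}$: conjugating the fundamental matrices $V_2$, $V_3$ by the diagonal matrices $h_1(t)$, $h_2(t)$ built from $1/\bar u_i(t)$, $1/\bar v_i(t)$, showing $DP(\bar{\omega})$ is similar to $\psi_2(T-\tau_2)\psi_1(\tau_2-\tau_1)$, and then combining Liouville's formula with an eigenvector argument (again using $b_1b_2<1$) to push the relevant multipliers below $1$. Relying only on the convergence theory gives attractivity but leaves the Lyapunov-stability half of ``globally asymptotically stable'' unproved. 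Everything else in your proposal --- the computation that condition $(\mathbf{iii})$ is equivalent to $\lambda_2>1$ and $\lambda_4>1$, the eigenvalues $\lambda_5,\lambda_6>1$ at the origin, competitiveness of each semiflow $Q_{it}$ and monotonicity of the composed period map $P$ with respect to the cone $K=\{(u,v):u\ge 0,\ v\le 0\}$ --- is correct and consistent with the paper; the two concrete computations that actually constitute the paper's proof are the pieces missing from yours.
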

		\begin{proof}
		It follows
		from conditions $(\mathbf{i})$, $(\mathbf{ii})$ and $(\mathbf{iii})$ that the trivial steady state  $E_{0}$, two semi-trivial $T$-periodic solutions $(u^*(t),0)$ and $(0,v^*(t))$ are unstable.
		From Lemma 2.2($\mathbf{iii}$) of \cite{Hsu.Zhao}, we know that system \eqref{main1.1}-\eqref{main1.3} has at least one positive $T$-periodic solution. Next, we prove system \eqref{main1.1}-\eqref{main1.3} has a unique  stable positive $T$-periodic solution $(\bar{u}(t), \bar{v}(t))$ originating from $(\bar{x}, \bar{y})$.
		
		We first prove the uniqueness of   positive $T$-periodic solution.	Assume that system \eqref{main1.1}-\eqref{main1.3} has another $T$-periodic solution $(\tilde{u}(t), \tilde{v}(t))$ originating from $(\tilde{x}, \tilde{y})$, with
		\begin{equation*}
			\tilde{u}(t) = \begin{cases}
				\tilde{u}_1(t), &\;\; t \in (0, \tau_1], \\
				\tilde{u}_2(t - \tau_1), &\;\; t \in (\tau_1, \tau_2], \\
				\tilde{u}_3(t - \tau_2), &\;\; t \in (\tau_2, T],
			\end{cases}
		\end{equation*}
		and
		\begin{equation*}
			\tilde{v}(t) = \begin{cases}
				\tilde{v}_1(t), &\;\; t \in (0, \tau_1], \\
				\tilde{v}_2(t - \tau_1), &\;\; t \in (\tau_1, \tau_2], \\
				\tilde{v}_3(t - \tau_2), &\;\; t \in (\tau_2, T].
			\end{cases}
		\end{equation*}
		\noindent  Without loss of generality, we assume that $\bar{u}(t) < \tilde{u}(t)$.	
		Note that $(\bar{u}, \bar{v})$ satisfying
		\begin{equation*}
			\begin{cases}
				\displaystyle \frac{d\bar{u}}{dt} = -d_1 \bar{u}, &\;\; t \in (0, \tau_1],\\
				\displaystyle \frac{d\bar{v}}{dt} = -d_2 \bar{v}, &\;\; t \in (0, \tau_1],
			\end{cases}
		\end{equation*}
		\begin{equation*}
			\begin{cases}
				\displaystyle \frac{d\bar{u}}{dt} = \bar{u}(1-\bar{u}-b_1\bar{v}), &\;\; t \in (\tau_1,\tau_2],\\
				\displaystyle \frac{d\bar{v}}{dt} = r \bar{v}(1-\bar{v}-b_2\bar{u}), &\;\; t \in (\tau_1,\tau_2],
			\end{cases}
		\end{equation*}
		\begin{equation*}
			\begin{cases}
				\displaystyle \frac{d\bar{u}}{dt} = \bar{u}(1-\bar{u}-b_1\bar{v}) -c_1 \bar{u}, &\;\; t \in (\tau_2,T],\\
				\displaystyle \frac{d\bar{v}}{dt} = r \bar{v}(1-\bar{v}-b_2\bar{u}) -c_2 \bar{v}, &\;\; t \in (\tau_2,T],
			\end{cases}
		\end{equation*}
		with $(\bar{u}(T), \bar{v}(T)) = (\bar{x}, \bar{y})$.
		We also have
		\begin{equation}\label{eq.pos.1}
			\begin{cases}
				\displaystyle \frac{d\tilde{u}}{dt} = -d_1 \tilde{u}, &\;\; t \in (0, \tau_1],\\
				\displaystyle \frac{d\tilde{v}}{dt} = -d_2 \tilde{v}, &\;\; t \in (0, \tau_1],
			\end{cases}
		\end{equation}
		\begin{equation}\label{eq.pos.2}
			\begin{cases}
				\displaystyle \frac{d\tilde{u}}{dt} = \tilde{u}(1-\tilde{u}-b_1\tilde{v}), &\;\; t \in (\tau_1,\tau_2],\\
				\displaystyle \frac{d\tilde{v}}{dt} = r \tilde{v}(1-\tilde{v}-b_2\tilde{u}), &\;\; t \in (\tau_1,\tau_2],
			\end{cases}
		\end{equation}
		\noindent  and
		\begin{equation}\label{eq.pos.3}
			\begin{cases}
				\displaystyle \frac{d\tilde{u}}{dt} = \tilde{u}(1-\tilde{u}-b_1\tilde{v}) -c_1 \tilde{u}, &\;\; t \in (\tau_2,T],\\
				\displaystyle \frac{d\tilde{v}}{dt} = r \tilde{v}(1-\tilde{v}-b_2\tilde{u}) -c_2 \tilde{v}, &\;\; t \in (\tau_2,T],
			\end{cases}
		\end{equation}
		with $(\tilde{u}(T), \tilde{v}(T)) = (\tilde{x}, \tilde{y}).$
		
		Integrate equation \eqref{eq.pos.1} from 0 to $\tau_1$, equation \eqref{eq.pos.2} from 0 to $\tau_2-\tau_1$, and equation \eqref{eq.pos.3} from 0 to $T-\tau_2$, and then summing them up will yield the result:
		\begin{equation}\label{eq.pos.4}
			-d_1 \tau_1+(\tau_2-\tau_1)+(1-c_1)(T-\tau_2)=\int_0^{\tau_2-\tau_1}(\tilde{u}_2(t)+b_1\tilde{v}_2(t)) dt + \int_0^{T-\tau_2}(\tilde{u}_3(t)+b_1\tilde{v}_3(t)) dt,
		\end{equation}
		and
		\begin{equation}\label{eq.pos.5}
			-d_2 \tau_1+r(\tau_2-\tau_1)+(r-c_2)(T-\tau_2)=\int_0^{\tau_2-\tau_1}r(\tilde{v}_2(t)+b_2\tilde{u}_2(t)) dt + \int_0^{T-\tau_2}r(\tilde{v}_3(t)+b_2\tilde{u}_3(t)) dt.
		\end{equation}
		Multiply formula \eqref{eq.pos.4} by $r$ and subtract formula \eqref{eq.pos.5} multiplied by $b_1$ to obtain the result:
		\begin{equation*}
			\int_0^{\tau_2-\tau_1} (\tilde{u}_2(t)+\tilde{u}_3(t)) dt = \frac{(d_2-d_1)\tau_1+(1-r)(T-\tau_2)+(1-c_1-r+c_2)(T-\tau_2)}{r(1-b_1b_2)}.
		\end{equation*}
		Similarly, we have
		\begin{equation*}
			\int_0^{\tau_2-\tau_1} (\bar{u}_2(t)+\bar{u}_3(t)) dt = \frac{(d_2-d_1)\tau_1+(1-r)(T-\tau_2)+(1-c_1-r+c_2)(T-\tau_2)}{r(1-b_1b_2)},
		\end{equation*}
		which implies that
		\begin{equation*}
			\int_0^{\tau_2-\tau_1} (\tilde{u}_2(t)+\tilde{u}_3(t)) dt = \int_0^{\tau_2-\tau_1} (\bar{u}_2(t)+\bar{u}_3(t)) dt.
		\end{equation*}
		This is obviously contradictory, which indicates that there exists unique positive periodic solution.
		
		We now demonstrate that the unique T-periodic solution $(\bar{u}(t),\bar{v}(t))$ originating from $\bar{w}:=(\bar{x},\bar{y})$ is stable.	
		Let $ V_1(t,\bar{w}),V_2(t,\bar{w}),V_3(t,\bar{w}),f_1,f_2,f_3$ be defined as in the proof of Lemma \ref{lemma4.1}. It follows from the chain rule that
		\begin{equation*}
			DP(\bar{w}) = DQ_{3(T-\tau_2)}(DQ_{2(\tau_2-\tau_1)}(Q_{1\tau_1}(\bar{w}))) \cdot DQ_{2(\tau_2-\tau_1)}(Q_{1\tau_1(\bar{w})})\cdot DQ_{1\tau_1}(\bar{w}).
		\end{equation*}
		\noindent  Then
		\begin{equation*}
			\begin{split}
				DQ_{1\tau_1}(\bar{w}) &= V_1(\tau_1, \bar{w}), \\
				DQ_{2(\tau_2-\tau_1)}(Q_{1\tau_1(\bar{w})}) &= V_2(\tau_2-\tau_1, Q_{1\tau_1}(\bar{w})), \\
				DQ_{3(T-\tau_2)}(DQ_{2(\tau_2-\tau_1)}(Q_{1\tau_1}(\bar{w}))) &= V_3(T-\tau_2, Q_{2(\tau_2-\tau_1)}, Q_{1\tau_1}(\bar{w}))).
			\end{split}
		\end{equation*}
		Furthermore,
		\begin{equation*}
			\begin{split}
				\frac{dV_1(t)}{dt} &= Df_1(U_1(t, \bar{w}))V_1(t), \quad V_1(0) = I, \\
				\frac{dV_2(t)}{dt} &= Df_2(U_2(t, \bar{w}))V_2(t), \quad V_2(0) = I, \\
				\frac{dV_3(t)}{dt} &= Df_3(U_3(t, \bar{w}))V_3(t), \quad V_3(0) = I.
			\end{split}
		\end{equation*}
		\noindent Let
		\begin{equation*}
			h_1(t) = \begin{bmatrix} \frac{1}{\overline{u}_2(t)} & 0 \\ 0 & \frac{1}{\overline{v}_2(t)} \end{bmatrix}.
		\end{equation*}
		Then we obtain
		\begin{equation*}
			h_1(0) = \begin{bmatrix} \frac{1}{\overline{u}e^{-d_1\tau_1}} & 0 \\ 0 & \frac{1}{\overline{v}e^{-d_2\tau_1}} \end{bmatrix}, h_1(\tau_2-\tau_1) = \begin{bmatrix} \frac{1}{\widetilde{u}} & 0 \\ 0 & \frac{1}{\widetilde{v}} \end{bmatrix}.
		\end{equation*}
		\noindent
		
		\noindent Let
		\begin{equation*}
			h_2(t) = \begin{bmatrix} \frac{1}{\overline{u}_3(t)} & 0 \\ 0 & \frac{1}{\overline{v}_3(t)} \end{bmatrix}.
		\end{equation*}
		Then we have
		\begin{equation*}
			h_2(0) = \begin{bmatrix} \frac{1}{\widetilde{u}} & 0 \\ 0 & \frac{1}{\widetilde{v}} \end{bmatrix}, h_2(T-\tau_2) = \begin{bmatrix} \frac{1}{\overline{u}} & 0 \\ 0 & \frac{1}{\overline{v}} \end{bmatrix}.
		\end{equation*}
		\noindent
		
		Let $\psi_1(t)=h_1(t)V_2(t)h_1^{-1}(0)$ , and hence $V_2(t)=h_1^{-1}(t)\psi_1(t)h_1(0)$. Then, we have
		\begin{equation*}
			\frac{d\psi_1(t)}{dt} = \begin{bmatrix} -\overline{u}_2 & -b_1\overline{v}_2 \\ -rd_2\overline{u}_2 & -r\overline{v}_2 \end{bmatrix} \psi_1(t)
		\end{equation*}
		with $\psi_1(0)=I$.
		Let $\psi_2(t)=h_2(t)V_3(t)h_2^{-1}(0)$ , and hence $V_3(t)=h_2^{-1}(t)\psi_2(t)h_2(0)$. Then, we have
		\begin{equation*}
			\frac{d\psi_2(t)}{dt} = \begin{bmatrix} -\overline{u}_3 & -b_1\overline{v}_3 \\ -rd_2\overline{u}_3 & -r\overline{v}_3 \end{bmatrix} \psi_2(t)
		\end{equation*}
		with $\psi_2(0)=I$.
		Hence,
		\begin{eqnarray*}
			DP(\bar{\omega})&=&V_3(T-\tau_2)V_2(\tau_2-\tau_1) \begin{bmatrix} e^{-d_1\tau_1} & 0 \\0 & e^{-d_2\tau_2} \end{bmatrix}  \\
			&=&h_2^{-1}(T-\tau_2) \psi_2(T-\tau_2) h_2(0) h_1^{-1}(\tau_2-\tau_1)\psi_1(\tau_2-\tau_1)h_1(0)\begin{bmatrix} e^{-d_1\tau_1} & 0 \\0 & e^{-d_2\tau_2} \end{bmatrix} \\
			&=&\begin{bmatrix} \bar{u} & 0 \\ 0 & \bar{v} \end{bmatrix} \psi_2(T-\tau_2) \psi_1(\tau_2-\tau_1) \begin{bmatrix}\frac{1}{\bar{u}} & 0 \\0 & \frac{1}{\bar{v}} \end{bmatrix},
		\end{eqnarray*}
		\noindent which implies that $DP(\bar{\omega})$ is similar to $\psi_2(T-\tau_2) \psi_1(\tau_2-\tau_1)$. Thus, we have $ r(DP(\bar{\omega})) = r[\psi_2(T-\tau_2) \psi_1(\tau_2-\tau_1)] $.
		
		Let
		\begin{equation*}
			\xi(t) = \begin{bmatrix} 1 & 0 \\ 0 & -1 \end{bmatrix} \psi_1(t) \begin{bmatrix} 1 & 0 \\ 0 & -1 \end{bmatrix}.
		\end{equation*}
		Then $\xi(t)$ satisfies
		\begin{equation*}
			\begin{cases}
				\displaystyle \frac{d\xi(t)}{dt} = \begin{bmatrix} -\bar{u}_2 & b_1 \bar{v}_2 \\ r d_2 \bar{u}_2 & -r \bar{v}_2 \end{bmatrix} \xi := A(t) \xi, \\
				\displaystyle \xi(0) = I.
			\end{cases}
		\end{equation*}
		Let $\lambda_1$ and $\lambda_2$ be two simple eigenvalues of $r(\xi(\tau_2-\tau_1))$. Since $b_1d_2-1<0$, by Liouville's formula, we have $0 < \lambda_1 \lambda_2 < e^{\int_0^{\tau_2-\tau_1} trace(A(s)) ds} < 1$. Without loss of generality, we assume $0 < \lambda_1 < \lambda_2$. Let $v = (v_1, v_2)^T$ be the positive eigenvector of $\lambda_2$ and $\hat{\xi}(t) = (\hat{\xi}_1(t), \hat{\xi}_2(t))^T := \xi(t)(v_1, v_2)^T$. Since
		\begin{equation*}
			\begin{cases}
				\displaystyle \frac{d\hat{\xi}_1}{dt} = -\bar{u}_2 \hat{\xi}_1 + b_1 \bar{v}_2 \hat{\xi}_2, \\
				\displaystyle \frac{d\hat{\xi}_2}{dt} = r d_2 \bar{u}_2 \hat{\xi}_1 - r \bar{v}_2 \hat{\xi}_2,
			\end{cases}
		\end{equation*}	
		then
		\begin{equation*}
			r \frac{d\hat{\xi}_1}{dt} + b_1 \frac{d\hat{\xi}_2}{dt} = -r\bar{u}_2 \hat{\xi}_1 + r b_1 d_2 \bar{u}_2 \hat{\xi}_1.
		\end{equation*}
		Integrating the above equation for $t$ from $0$ to $\tau_2-\tau_1$, we then obtain
		\begin{equation*}
			(r v_1 + b_1 v_2)(\lambda_2 - 1) = r(b_1d_2-1)\int_0^{\tau_2-\tau_1} \bar{u}_2 \hat{\xi}_1.
		\end{equation*}
		Since condition $(\mathbf{iii})$ implies that $b_1b_2-1<0$, we have $\lambda_2<1$.	
		Thus, we have $r(\psi_1(\tau_2-\tau_1))<1$. Similarly, we can obtain $ r(\psi_2(T-\tau_2))<1$. Then we know that the unique positive periodic solution is stable.
		
		Finally, note that all the boundary steady states, including the trivial steady state  $E_{0}$, two semi-trivial $T$-periodic solutions $(u^*(t),0)$ and $(0,v^*(t))$, are unstable. From Lemma 2.2($\mathbf{iii}$) of \cite{Hsu.Zhao}, we know that every solution of \eqref{main1.1}-\eqref{main1.3} goes to $(\overline{u}(t),\overline{v}(t))$, which indicates that system \eqref{main1.1}-\eqref{main1.3} has a unique globally asymptotically stable positive $T$-periodic solution. This completes the proof.
		\end{proof}

		\subsection{Bistability of semitrivial fixed points}	
		
		\begin{theorem}\label{theorem4.5}
		Assume that the following conditions  are satisfied: \\
		$(\mathbf{i})$ $\tau_1 < \text{min} \{\tau^{*}_1,\tau^{**}_1\}$.\\
		$(\mathbf{ii})$  $\tau_2> \text{max} \{\tau_2^*,\tau_2^{**}\}$.\\
		$(\mathbf{iii})$ $\frac{r c_1}{b_1 c_2} < \frac{\tau_2 - \tau_2^{**}}{\tau_2 - \tau_2^*} < \frac{r b_2 c_1}{c_2}.$ \\Then both the semi-trivial $T$-periodic solution $(u^*(t), 0)$ and $( 0, v^*(t))$ of system \eqref{main1.1}-\eqref{main1.3} are stable.
		\end{theorem}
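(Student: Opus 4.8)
The plan is to read both stability conclusions directly off Lemma \ref{lemma4.1}, since condition $(\mathbf{iii})$ has been engineered to be precisely the conjunction of the two scalar stability criteria proved there. First I would dispose of existence: conditions $(\mathbf{i})$ and $(\mathbf{ii})$ supply $\tau_1<\tau^{*}_1$, $\tau_1<\tau^{**}_1$, $\tau_2>\tau_2^*$ and $\tau_2>\tau_2^{**}$, so by Remark \ref{rem.(0,v)} the system admits both semi-trivial $T$-periodic solutions $(u^*(t),0)$ and $(0,v^*(t))$. In particular $\tau_2-\tau_2^*>0$ and $\tau_2-\tau_2^{**}>0$, a fact I will use to control inequality directions in the rewriting step.

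For $(u^*(t),0)$ I would invoke Lemma \ref{lemma4.1}$(\mathbf{i})$, whose stability criterion is $\frac{\tau_2-\tau_2^{**}}{\tau_2-\tau_2^*}<\frac{rb_2c_1}{c_2}$. This is exactly the right-hand inequality of condition $(\mathbf{iii})$, so $(u^*(t),0)$ is stable with no further work. Concretely, the eigenvalue $\lambda_2=e^{c_2(\tau_2-\tau_2^{**})-rb_2c_1(\tau_2-\tau_2^*)}$ computed in that lemma satisfies $\lambda_2<1$ under the right inequality, while $\lambda_1=e^{-c_1(\tau_2-\tau_2^*)}<1$ holds automatically from $\tau_2>\tau_2^*$.

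For $(0,v^*(t))$, Lemma \ref{lemma4.1}$(\mathbf{ii})$ gives the criterion $\frac{\tau_2-\tau_2^*}{\tau_2-\tau_2^{**}}<\frac{b_1c_2}{rc_1}$. The only step demanding any care is to cast this in the form appearing in $(\mathbf{iii})$: since both $\tau_2-\tau_2^*$ and $\tau_2-\tau_2^{**}$ are positive, I may take reciprocals of both sides, reversing the inequality, to obtain the equivalent statement $\frac{\tau_2-\tau_2^{**}}{\tau_2-\tau_2^*}>\frac{rc_1}{b_1c_2}$, which is exactly the left-hand inequality of $(\mathbf{iii})$. Hence $(0,v^*(t))$ is stable as well, and the two semi-trivial periodic solutions are simultaneously stable, establishing the claimed bistability.

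I expect essentially no genuine obstacle here: the whole content is the repackaging of the two one-sided thresholds of Lemma \ref{lemma4.1} into the single two-sided bound $(\mathbf{iii})$, and the one nontrivial verification is the sign check on the denominators that legitimizes the reciprocal manipulation. As a consistency sanity check I would note that the interval in $(\mathbf{iii})$ is nonempty only when $\frac{rc_1}{b_1c_2}<\frac{rb_2c_1}{c_2}$, i.e. $b_1b_2>1$ (strong competition); this is the expected complement of the coexistence regime of Theorem \ref{theorem4.4}, where the same ratio is pinned between $\frac{rb_2c_1}{c_2}$ and $\frac{rc_1}{b_1c_2}$ and which requires $b_1b_2<1$.
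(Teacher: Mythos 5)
Your proposal is correct and follows essentially the same route as the paper: the paper's proof likewise deduces from Lemma \ref{lemma4.1} (under conditions $(\mathbf{i})$--$(\mathbf{iii})$) that $DP(\omega_1)$ has eigenvalues $\lambda_1<1$, $\lambda_2<1$ and $DP(\omega_2)$ has eigenvalues $\lambda_3<1$, $\lambda_4<1$, so both semi-trivial $T$-periodic solutions are stable. Your reciprocal manipulation (valid since $\tau_2-\tau_2^*>0$ and $\tau_2-\tau_2^{**}>0$ by condition $(\mathbf{ii})$) and the $b_1b_2>1$ consistency check are sound, slightly more explicit versions of what the paper leaves implicit.
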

		\begin{proof}		
		From the conditions (i)-(iii) above and Theorem \ref{lemma4.1}, we see that matrix $DP(\omega_1)$ has two positive eigenvalues $\lambda_1<1$ and $\lambda_2<1$
		and matrix $DP(\omega_2)$ has two positive eigenvalues $\lambda_3<1$ and $\lambda_4<1$.
		Thus, we know that both boundary $T$-periodic solutions $(u^*(t),0)$ and $(0,v^*(t))$ are stable. The proof is completed.
		\end{proof}

		\section{Numerical simulations}
		\ \  \ \
		In this section, we  give numerical examples to demonstrate our analytic results for system \eqref{main1.1}-\eqref{main1.3}.
		
	\subsection{	Stability of semi-trivial periodic solutions}
	
		\noindent $(\mathbf{a})$ Given the parameters
		\begin{align*}
			d_1 &= 0.4, & d_2 &= 0.8, & b_1 &=0.2, & b_2 &= 0.2, & r &= 1, \\
			c_1 &= 0.4, & c_2 &= 0.6, & \tau_1 &= 6, & \tau_2 &= 9, & T &=12,
		\end{align*}
		the thresholds are $\tau_1^{*}=8.57$, $\tau_1^{**}=6.67$, $\tau_2^{*}=3$ and $\tau_2^{**}=10$, which implies that $\tau_1<\tau^{*}_1$ and $\tau_2^*<\tau_2<\tau_2^{**}$. It follows from Theorem \ref{theorem4.2} that the unique semi-trivial $T$-periodic solution $(u^*(t),0)$ of system \eqref{main1.1}-\eqref{main1.3} is globally asymptotically stable as shown in Figure \ref{example2a}.
		$(\mathbf{b})$ Given the parameters
		\begin{align*}
			d_1 &= 0.8, & d_2 &= 0.4, & b_1 &=0.2, & b_2 &= 0.2, & r &= 1, \\
			c_1 &= 0.6, & c_2 &= 0.4, & \tau_1 &= 6, & \tau_2 &= 9, & T &=12,
		\end{align*}
		the thresholds are $\tau_1^{*}=6.67$, $\tau_1^{**}=8.57$, $\tau_2^{*}=10$ and $\tau_2^{**}=3$, which implies that $\tau_1<\tau^{**}_1$ and $\tau_2^{**}<\tau_2<\tau_2^{*}$. It follows from Theorem \ref{theorem4.3}  that the unique semi-trivial $T$-periodic solution $(0,v^*(t))$ of system \eqref{main1.1}-\eqref{main1.3} is globally asymptotically stable as shown in Figure \ref{example2b}.

		\begin{figure}[htbp]
		\begin{subfigure}[b]{0.49\textwidth}
			\centering
			\includegraphics[width=1.05\textwidth]{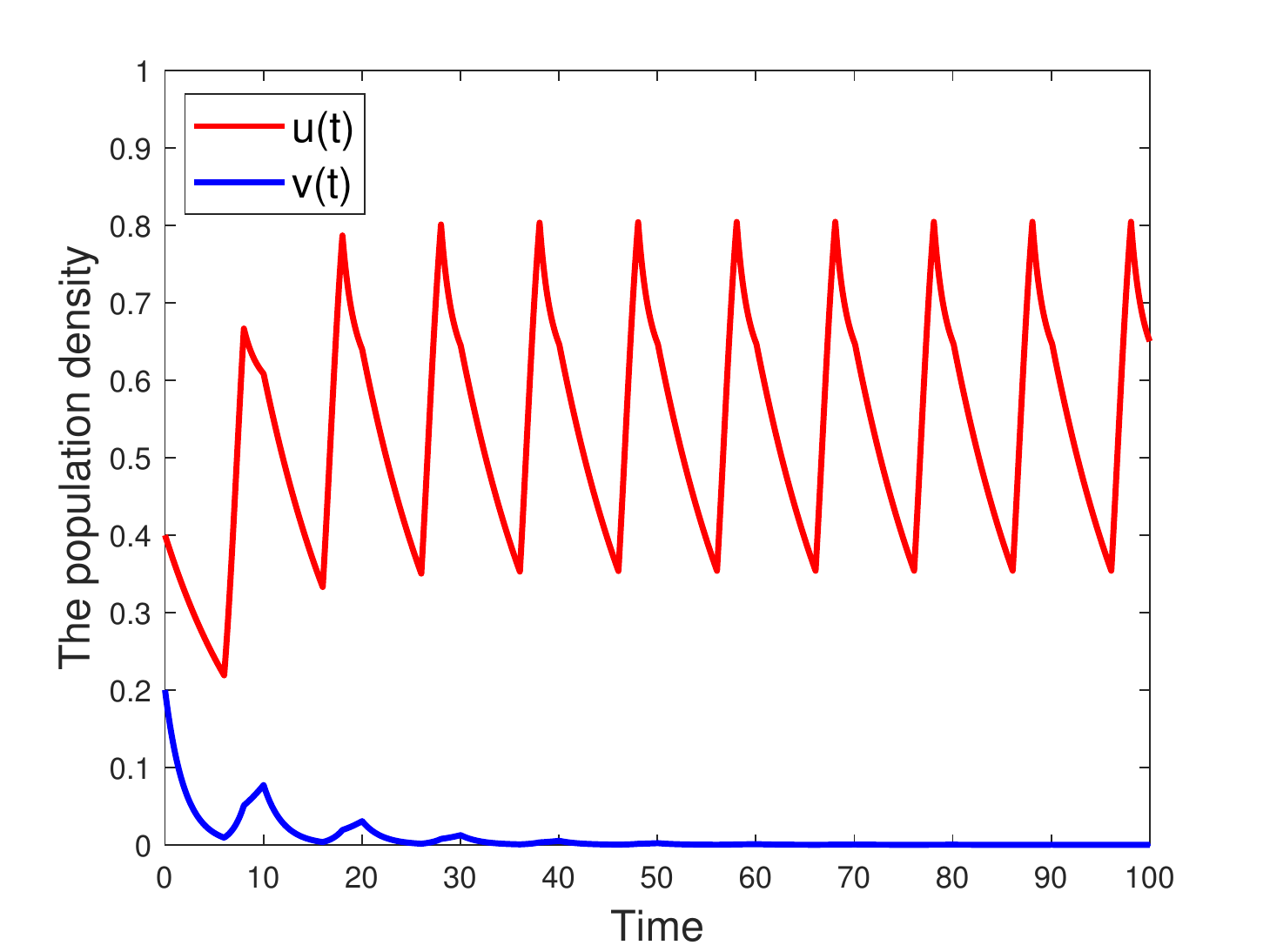}
			\caption{}
			\label{example2a}
		\end{subfigure}
		\hfill
		\begin{subfigure}[b]{0.49\textwidth}
			\centering
			\includegraphics[width=1.05\textwidth]{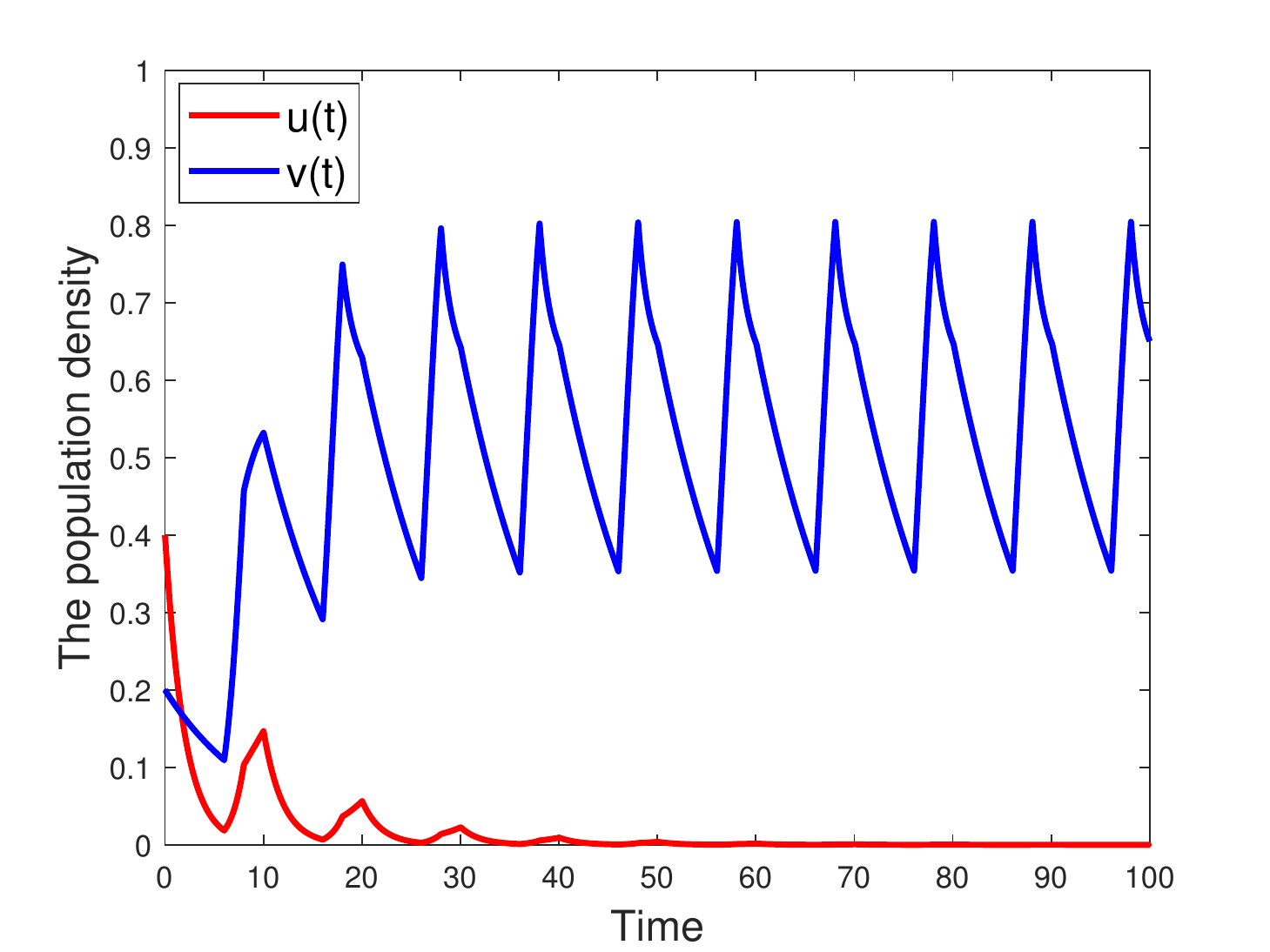}
			\caption{}
			\label{example2b}
		\end{subfigure}
		\captionsetup{font={footnotesize}}
		\caption{Stability of two semi-trivial periodic solutions $(u^*(t),0)$ and $(0,v^*(t))$. (a) Semi-trivial periodic solution $(u^*(t),0)$ is stable. (b) $(0,v^*(t))$ is stable. }
		\end{figure}

		\subsection{Coexistence under weak competition}

		Given the parameters
		\begin{align*}
			d_1 &= 0.6, & d_2 &= 0.4, & b_1 &=0.2, & b_2 &= 0.2, & r &= 1, \\
			c_1 &= 0.6, & c_2 &= 0.4, & \tau_1 &= 6, & \tau_2 &= 9, & T &=12,
		\end{align*}
	which implies that there exists weak competition in system.	The thresholds are $\tau_1^{*}=7.5$, $\tau_1^{**}=8.57$, $\tau_2^{*}=8$ and $\tau_2^{**}=3$, which implies that conditions ($\mathbf{i}$)-($\mathbf{iii}$) in Theorem \ref{theorem4.4} are satisfied. It follows from Theorem \ref{theorem4.4} that system \eqref{main1.1}-\eqref{main1.3} has a unique globally asymptotically stable positive periodic solution $(\bar{u}(t),\bar{v}(t))$ originating from $(\bar{u},\bar{v})$ as shown in Figure \ref{example3}.

		\begin{figure}[htbp]
		\centering
		\includegraphics[width=0.55\textwidth]{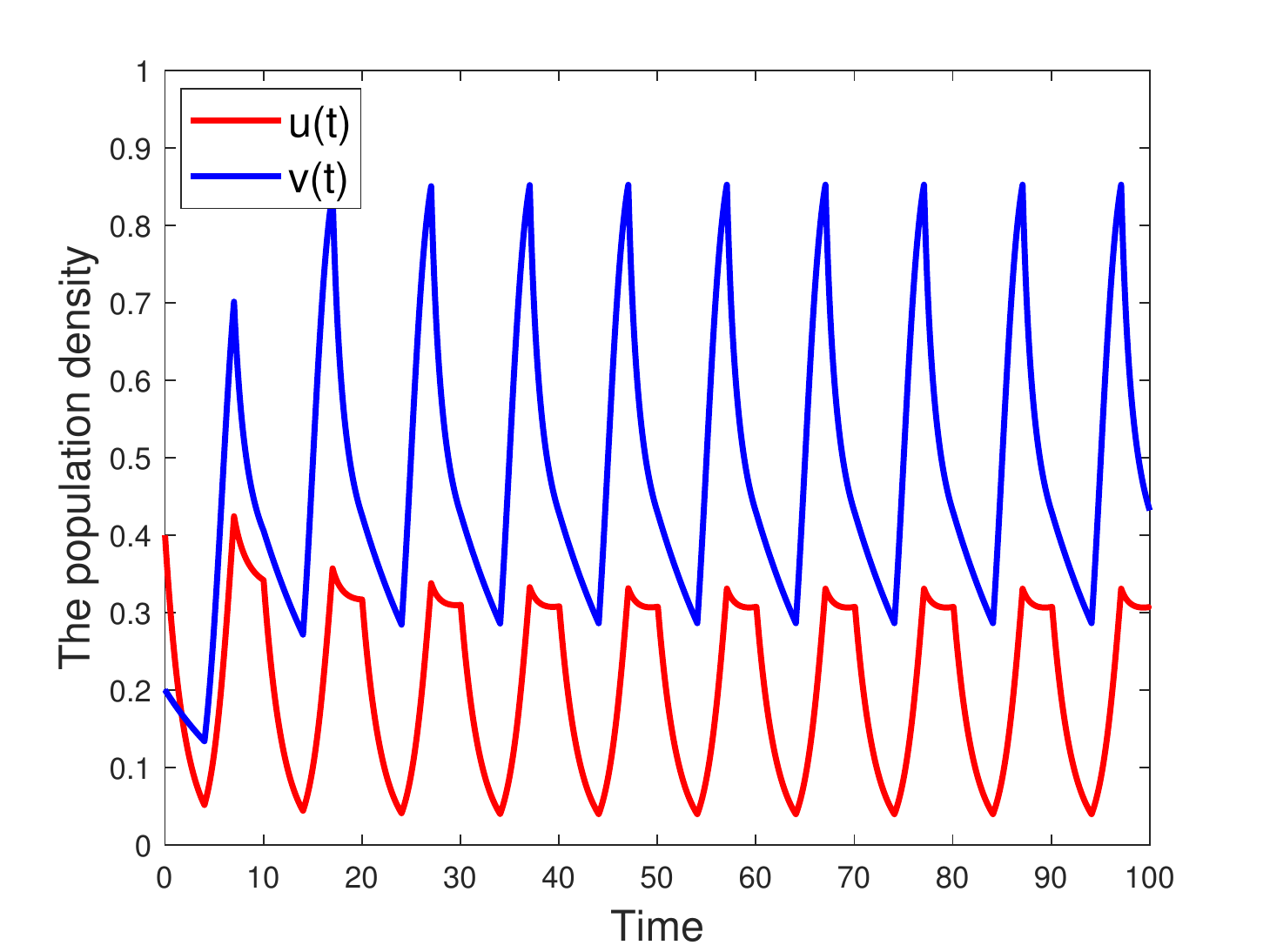}
		\captionsetup{font={footnotesize}}
		\caption{Periodic coexistence of two species $u(t)$ and $v(t)$. There exists a globally asymptotically stable positive periodic solution.}
		\label{example3}
		\end{figure}	
		
		\subsection{Bistability under strong competition}

		Given the parameters
		\begin{align*}
			d_1 &= 0.4, & d_2 &= 0.4, & b_1 &=2, & b_2 &= 2, & r &= 1, \\
			c_1 &= 0.6, & c_2 &= 0.4, & \tau_1 &= 6, & \tau_2 &= 9, & T &=12,
		\end{align*}
		which implies that there exists strong competition in system.  The thresholds are $\tau_1^{*}=8.57$,  $\tau_1^{**}=8.57$, $\tau_2^{*}=6$ and $\tau_2^{**}=3$. It follows from Theorem \ref{theorem4.5} that there exists  bistability of boundary equilibrium point in system \eqref{main1.1}-\eqref{main1.3} as shown in Figures \ref{example4a} and \ref{example4b}.

		\begin{figure}[htbp]
		\begin{subfigure}[b]{0.49\textwidth}
			\centering
			\includegraphics[width=1.05\textwidth]{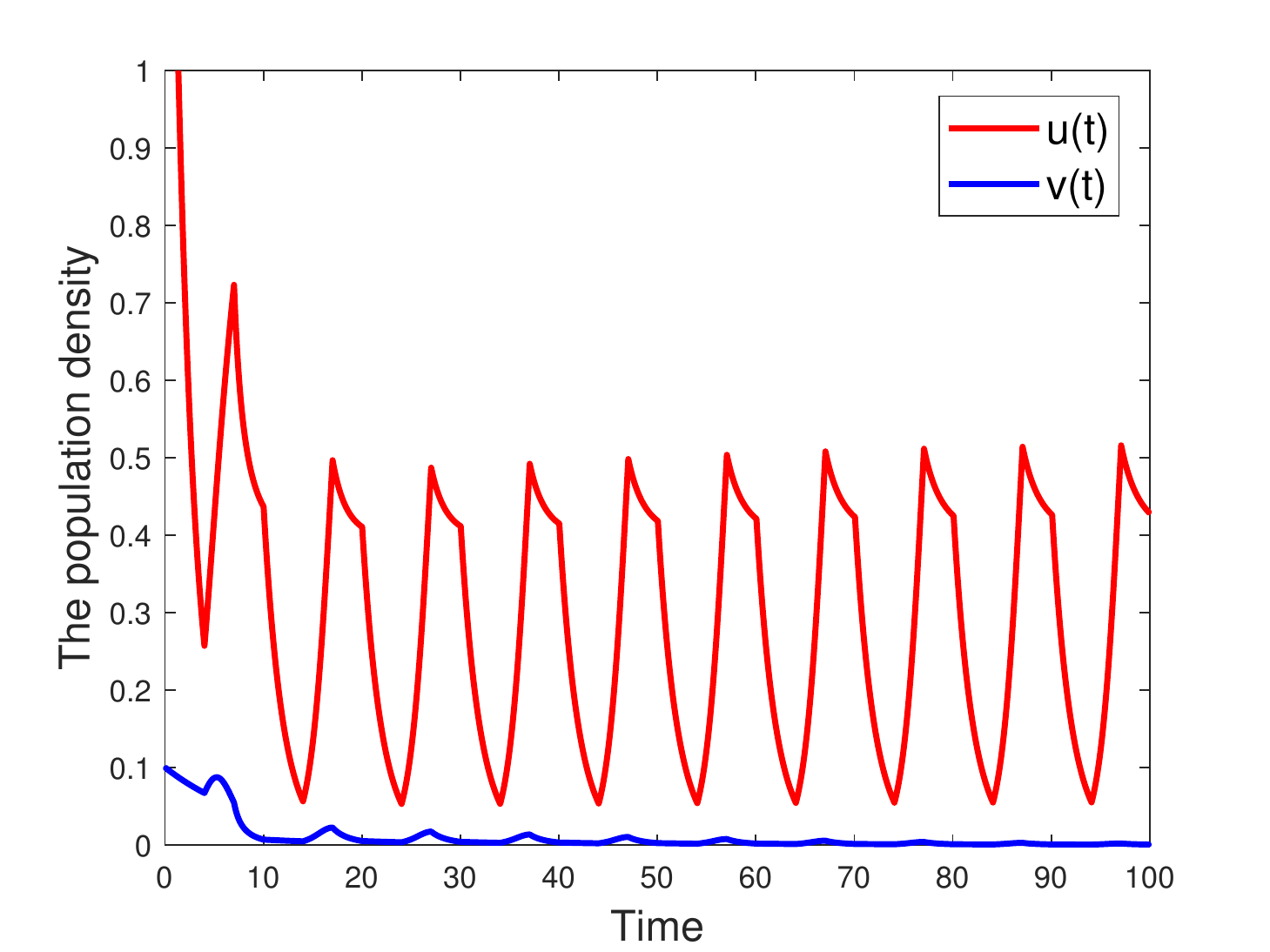}
			\caption{}
			\label{example4a}
		\end{subfigure}
		\hfill
		\begin{subfigure}[b]{0.49\textwidth}
			\centering
			\includegraphics[width=1.05\textwidth]{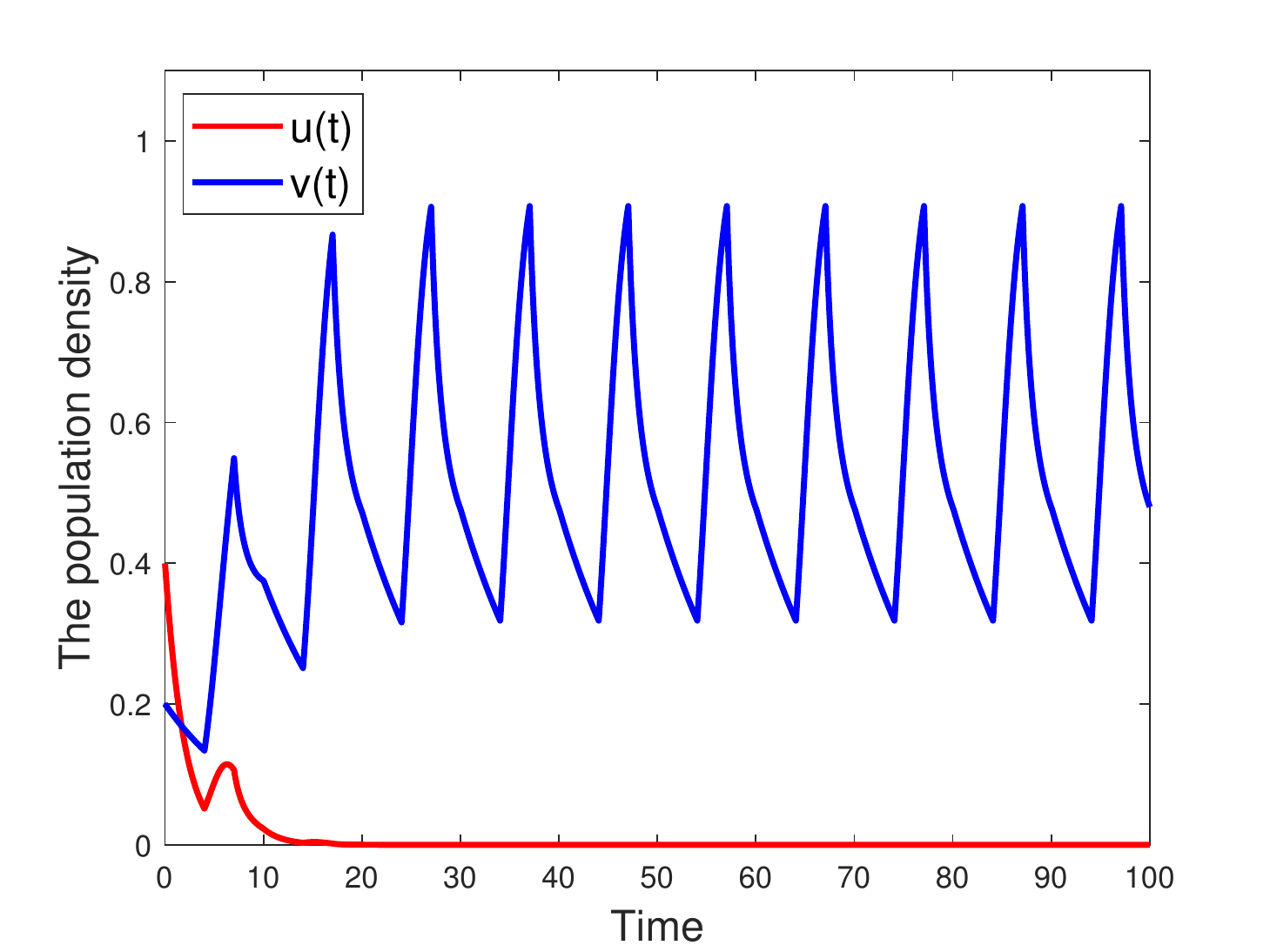}
			\caption{}
			\label{example4b}
		\end{subfigure}
		\captionsetup{font={footnotesize}}
		\caption{Bistability of two semi-trivial periodic solution $(u^*(t),0)$ and $(0,v^*(t))$. (a). The initial value is $u_0=2$, $v_0=0.1$. (b). The initial value is $u_0=0.4$, and $v_0=0.2$. }
		\end{figure}
		
		\section{Conclusion and discussion }
		\ \ \ \
		The duration and intensity of grazing management are crucial for the sustainable utilization of grassland resources in arid and semi-arid regions. This study explores effective grazing management strategies to ensure the long-term stability of these resources and ecosystem health.
		
		We first analyze the global dynamics of a single vegetation population under seasonal grazing (see Theorems \ref{lem.u.solu}-\ref{lem.v.solu}, Theorems \ref{lem.u.tao1}-\ref{lem.v0}). We established precise critical durations for the dry season
  and grazing period that determine whether a vegetation population persists or goes extinct. That is $\tau_1^{*}=\frac{T}{d_1+1}$, $\tau_2^*=\frac{(d_1+1)\tau_1+(c_1-1)T}{c_1}$, $\tau_1^{**}=\frac{rT}{d_2+r}$, and $\tau_2^{**}=\frac{(d_2+r)\tau_1+(c_2-r)T}{c_2}$. The bifurcation diagrams on the plane $\tau_1-\tau_2$  reveal that
  under low grazing intensity ($c_1 < 1$ or $c_2 < r$), vegetation can persist even if grazing occurs throughout the growth period (see Figure \ref{fig.u&v}).
Under high grazing intensity ($c_1 > 1$ or $c_2 >r$), persistence requires that grazing be restricted to only part of the growth season.

	These thresholds provide actionable grazing strategies for grazing management. For instance, for semi-arid regions like northwestern China, when $d_{1}=0.5$, we have $\tau^{\ast}_{1}=12/(d_{1}+1)=8$. If local dry seasons last $6$ months ($\tau_{1}=6, T=12$), we have  $\tau^{\ast}_{1}=6<8$, meaning grazing can be allowed if $\tau_{2}>\tau_{2}^{\ast}=[(0.5+1)6+(c_{1}-1)12]/c_{1}$. For low grazing intensity $c_{1}=0.4$, $\tau^{\ast}_{2}=4.5$, so grazing can start after  dry season. For high grazing intensity ($c_{1}=1.2$), $\tau_{2}^{\ast}=9.5$, requiring grazing to be delayed until 3.5 months of growth to avoid species extinction.
		\begin{figure}[H]
		\centering
		\begin{subfigure}[b]{0.48\textwidth}
			\includegraphics[width=\textwidth]{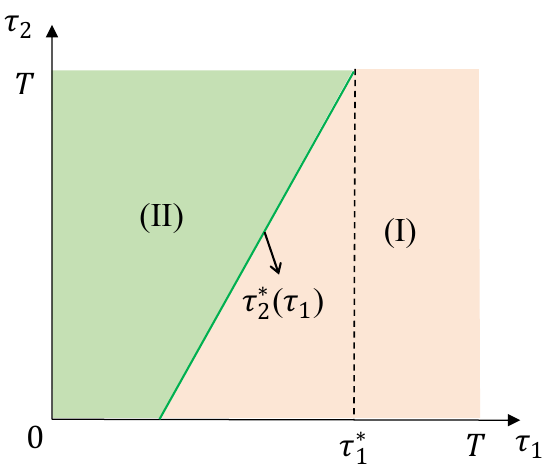}
			\caption{$0<c_1<1$}
			\label{fig.u.1}
		\end{subfigure}
		\hfill
		\begin{subfigure}[b]{0.48\textwidth}
			\includegraphics[width=\textwidth]{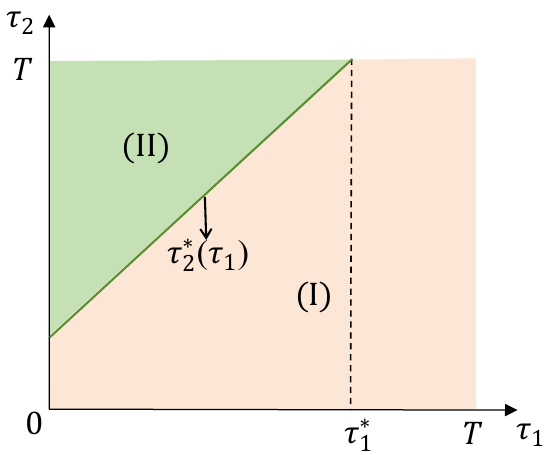}
			\caption{$c_1>1$}
			\label{fig.u.2}
		\end{subfigure}
		
		\vspace{1em} 
		
		\begin{subfigure}[b]{0.48\textwidth}
			\includegraphics[width=\textwidth]{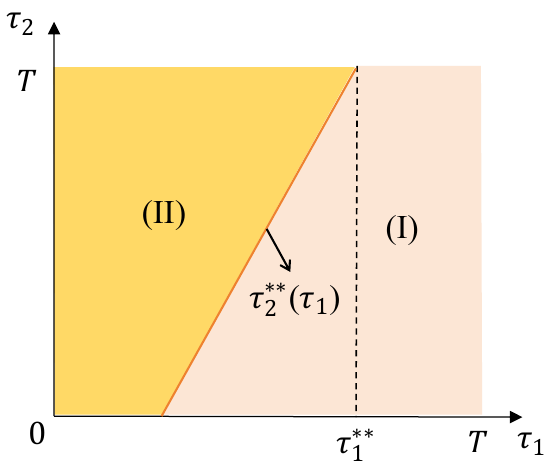}
			\caption{$0<c_2<r$}
			\label{fig.v.1}
		\end{subfigure}
		\hfill
		\begin{subfigure}[b]{0.48\textwidth}
			\includegraphics[width=\textwidth]{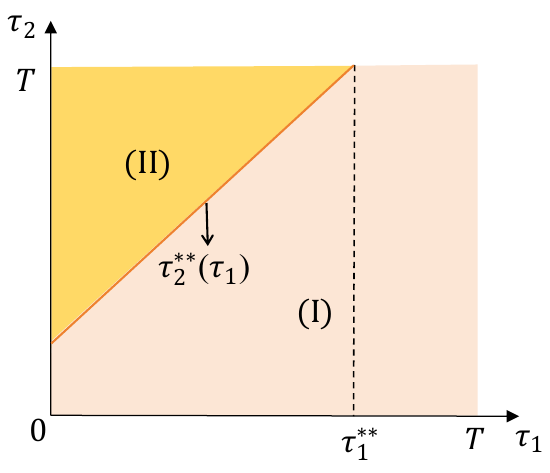}
			\caption{$c_2>r$}
			\label{fig.v.2}
		\end{subfigure}
		\captionsetup{font={footnotesize}}
		\caption{Bifurcation diagrams in $\tau_{1}-\tau_{2}$ plane for single species $u$ (Figures (a) and (b)) and  $v$ (Figures (c) and (d)). In region (I), the trivial solution is GAS. In region (II), unique GAS positive periodic solution exists.   }
		\label{fig.u&v}
		\end{figure}

		We then studied a two-species vegetation competition system subjected to varying grazing periods and intensities. Our research indicates that grazing intensity and duration are crucial for the stability of both trivial and non-trivial solutions. We identified four critical thresholds related to the durations of the dry season and grazing season:$\tau_1^{*}, \tau_2^*, \tau_1^{**}, \tau_2^{**}.$
		These thresholds classify the system into different dynamic regimes.

		(1)Extinction: both species die out if the dry season is too long or grazing is excessive.

(2)Competitive exclusion: one species dominates when its grazing period is shorter than the other's.

(3)Coexistence: a unique globally stable positive periodic solution exists under short grazing and weak competition ($b_1 b_2<1$).

(4)Bistability: both semi-trivial solutions are stable under strong competition ($b_1 b_2>1$).

These results are visualized in bifurcation diagrams (Figures \ref{fig.pos} and \ref{fig.bis}), highlighting how grazing management can alter competitive outcomes.
		In particular, Figure \ref{fig.pos} presents the bifurcation diagram on the $\tau_{1}$-$\tau_{2}$ plane for the weak competition case ($b_1 b_2<1$), illustrating the effects of dry season and grazing period under varying grazing intensities $c_{1}$ and $c_{2}$. One can note that there exist regions supporting a globally asymptotically stable  positive periodic solution. But there is no regions where both boundary solutions $(u^{*}(t),0)$ and $(0,v^{*}(t))$ are simultaneously stable.
		
Figure \ref{fig.bis} shows the bifurcation diagram for the strong competition case ($b_1 b_2>1$). One can see that, conversely,  there exist regions  where both boundary solutions are stable, but no regions support a GAS positive periodic solution.
		We note that the global dynamics remain unclear for certain parameter combinations, such as in regions (V) and (VI) of Figures \ref{fig.pos} and \ref{fig.bis}.

		Our model offers actionable insights for sustainable grassland management in semi-arid regions. By adjusting the timing and intensity of grazing based on seasonal and species-specific thresholds, managers can promote biodiversity, prevent overgrazing, and enhance ecosystem resilience.

While our model captures essential seasonal and grazing effects, several extensions are needed.
Firstly, we can incorporate more realistic harvesting functions (e.g., Michaelis¨CMenten type).
Additionally, it is interesting to introducing spatial heterogeneity to study pattern formation and grazing impacts in fragmented landscapes.
At the same time, exploring unresolved dynamics in regions (V) and (VI) of the bifurcation diagrams are important.
Future work will also consider stochastic environmental variations and multi-species interactions to further bridge theoretical models and field applications.

		\begin{figure}[htbp]
		\centering
		\begin{subfigure}[b]{0.48\textwidth}
			\includegraphics[width=\textwidth]{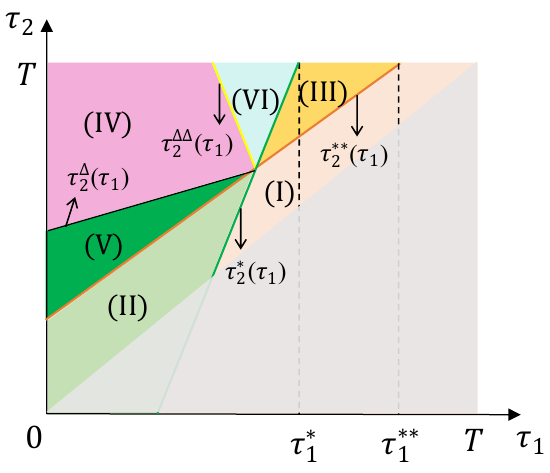}
			\caption{$\tau_1^{*}<\tau_1^{**}, 0<c_1<1, c_2>r$}
		\end{subfigure}
		\hfill
		\begin{subfigure}[b]{0.48\textwidth}
			\includegraphics[width=\textwidth]{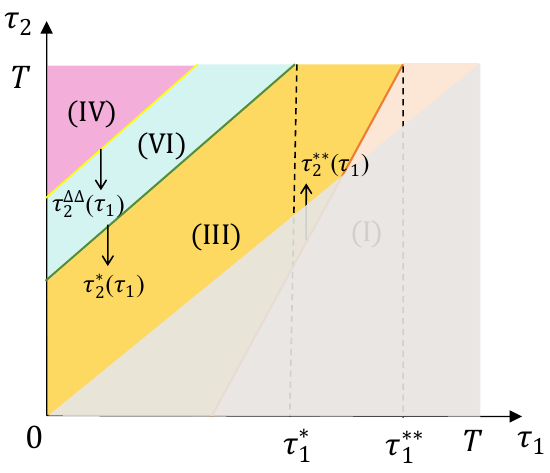}
			\caption{$\tau_1^{*}<\tau_1^{**}, c_1>1, 0<c_2<r$}
		\end{subfigure}
		
		\vspace{1em} 
		
		\begin{subfigure}[b]{0.48\textwidth}
			\includegraphics[width=\textwidth]{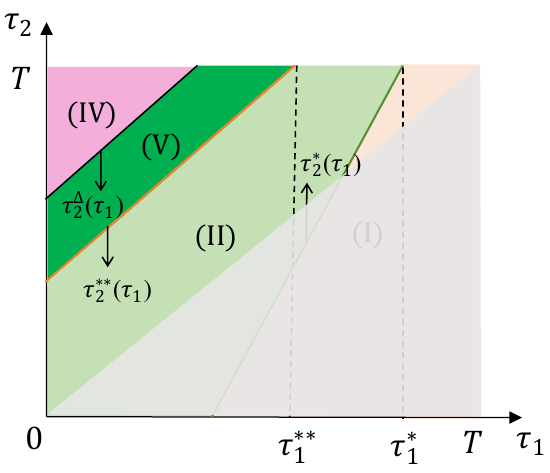}
			\caption{$\tau_1^{**}<\tau_1^{*}, 0<c_1<1, c_2>r$}
		\end{subfigure}
		\hfill
		\begin{subfigure}[b]{0.48\textwidth}
			\includegraphics[width=\textwidth]{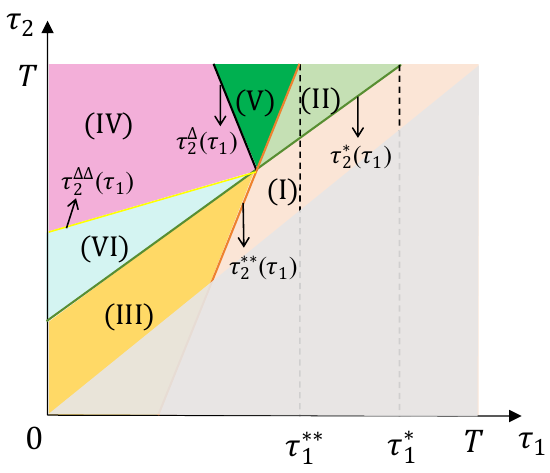}
			\caption{$\tau_1^{**}<\tau_1^{*}, c_1>1, 0<c_2<r$}
		\end{subfigure}
		\captionsetup{font={footnotesize}}
		\caption{Impacts of the duration and  intensity of grazing cycle on  the dynamics of  the competition system under $b_1 b_2<1$. Region (I): the trivial solution $E_0$ is GAS;  Region (II): the semi-trivial solution $(u^*(t),0)$ is GAS;  Region (III): the semi-trivial solution $(0,v^*(t))$ is GAS; Region (IV):  the unique  positive periodic solution is GAS. Region (V): the semi-trivial solution $(u^*(t),0)$ is locally asymptotically stable  (LAS), but it is not confirmed whether there are positive periodic solutions. Region (VI): the semi-trivial solution $(0,v^*(t))$ is LAS. It is not confirmed whether there are positive periodic solutions. The straight line $\tau_2^{\triangle}(\tau_1)$ is derived from the equation  $\frac{\tau_2-\tau_2{**}}{\tau_2-\tau_2^*}=\frac{rb_2c_1}{c_2}$. The  straight line $\tau_2^{\triangle\triangle}(\tau_1)$ is derived from the  equation $\frac{\tau_2-\tau_2{**}}{\tau_2-\tau_2^*}=\frac{rc_1}{b_1c_2}$.}
		\label{fig.pos}
		\end{figure}
		
		\begin{figure}[H]
		\centering
		\begin{subfigure}[b]{0.48\textwidth}
			\includegraphics[width=\textwidth]{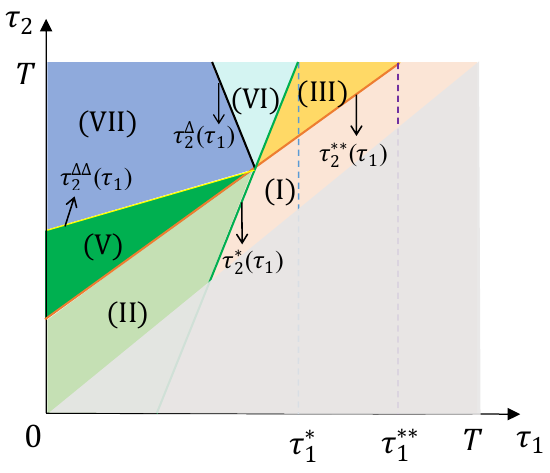}
			\caption{$\tau_1^{*}<\tau_1^{**}, 0<c_1<1, c_2>r$}
		\end{subfigure}
		\hfill
		\begin{subfigure}[b]{0.48\textwidth}
			\includegraphics[width=\textwidth]{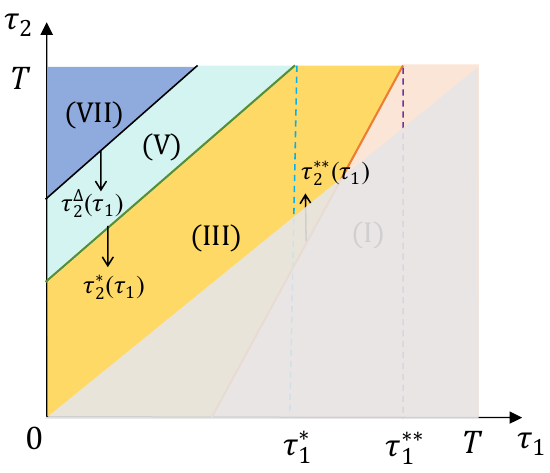}
			\caption{$\tau_1^{*}<\tau_1^{**}, c_1>1, 0<c_2<r$}
		\end{subfigure}
		
		\vspace{1em} 
		
		\begin{subfigure}[b]{0.48\textwidth}
			\includegraphics[width=\textwidth]{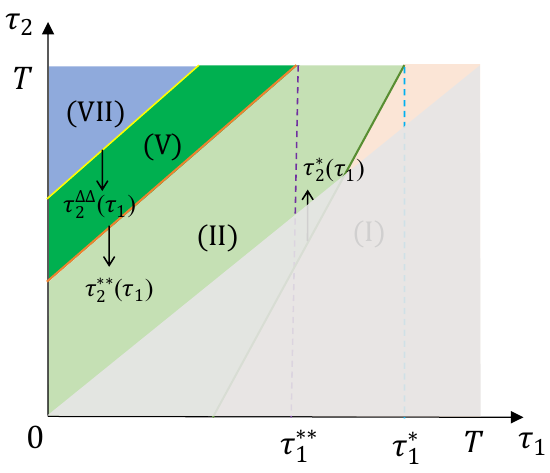}
			\caption{$\tau_1^{**}<\tau_1^{*}, 0<c_1<1, c_2>r$}
		\end{subfigure}
		\hfill
		\begin{subfigure}[b]{0.48\textwidth}
			\includegraphics[width=\textwidth]{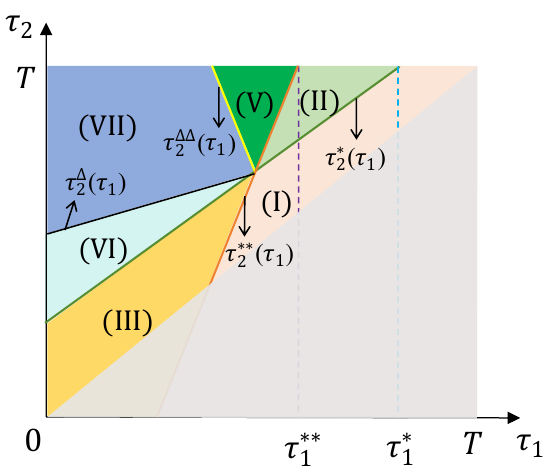}
			\caption{$\tau_1^{**}<\tau_1^{*}, c_1>1, 0<c_2<r$}
		\end{subfigure}
		\captionsetup{font={footnotesize}}
		\caption{Impacts of the duration and  intensity of grazing cycle on  the dynamics of  the competition system under $b_1 b_2>1$. In region (I), the trivial solution $E_0$ is GAS. Region (II) and (III) represent that the semi-trivial solution $(u^*(t),0)$ and the semi-trivial solution $(0,v^*(t))$ are  GAS, respectively. Region (VII) represents that there exists bistable solution $(u^*(t),0)$ and $(0,v^*(t))$. In region (V), the semi-trivial solution $(u^*(t),0)$ is LAS, but it is not confirmed whether there are positive periodic solutions. In region (VI), the semi-trivial solution $(0,v^*(t))$ is LAS, but it is not confirmed whether there are positive periodic solutions. }
		\label{fig.bis}
		\end{figure}

	\end{document}